\documentclass[12pt]{amsart}
\usepackage{amssymb}
\usepackage{url}
\usepackage{epsfig}  	

\newtheorem{thm}{Theorem}%[section]

\newcommand{\dia}{$\diamondsuit$ }
\newcommand{\diaa}{$\diamondsuit\!$}
\newcommand{\diaas}{$\diamondsuit$\hskip -0.3 pt s }
\newcommand{\diab}{$\blacklozenge$ }
\newcommand{\diabb}{$\blacklozenge\!$}
\newcommand{\diabs}{$\blacklozenge$\hskip -0.3 pt s }
\newcommand{\whbox}{$\square$ }
\newcommand{\blbox}{$\blacksquare$ }
\newcommand{\whboxx}{$\square$\hskip -0.2 pt}
\newcommand{\blboxx}{$\blacksquare$\hskip -0.1 pt}

\begin{document}

\title[Colouring Isonemal Fabrics]{Colouring Isonemal Fabrics\\ with more than two Colours\\ and Non-twilly Redundancy}
\date{}
\subjclass[2000]{primary 52C20; secondary 05B45, 51M20}
\keywords{colour, fabric, isonemal, striping, weaving}
\thanks{Work on this material was done at home and at Wolfson College, Oxford. I am grateful for Oxford hospitality and Winnipeg patience. I am particularly grateful to photographer Allen Patterson for producing Figures \ref{fig:colCubeFront} and \ref{fig:colCubeBack} from my models.}

\author{R.S.D.~Thomas}
\address{St John's College and Department of Mathematics, University of Manitoba,  Winnipeg, Manitoba  R3T 2N2  Canada. orcid.org/0000-0003-4697-4209}
\email{robert.thomas@umanitoba.ca}

\begin{abstract}
Perfect colouring of isonemal fabrics by thin and thick striping of warp and weft with more than two colours is examined where the cells with warps and wefts of the same colour do not appear along diagonal lines (not twilly redundancy). In species 33 to 39, all fabrics of specific orders (linear periods) can be perfectly coloured by thin striping with satin redundancy. In fewer species, all fabrics of specific orders can be perfectly coloured by thick striping in two different ways with doubled satin redundancy. The isonemal fabric 6-1-1 can be used as a redundancy configuration. All these colourings allow the coloured weaving of flat tori and -- a few of them -- cubes.
\end{abstract}
\maketitle
\section{Introduction}%1
\label{sect:intro}

\noindent This paper presents a mathematical solution to a problem that is only partly mathematical. It lies in the mathematical study of weaving that was begun by Gr\"unbaum and Shephard about 35 years ago \cite{GS1980}. Their crucial insight was that weaving fabrics was made both more attractively symmetrical and mathematically interesting by demanding that the fabric be what they called {\em isonemal}, meaning that the symmetry group of the fabric is transitive on the strands composing it. The topology of a weave is {\em normally} represented by colouring vertical strands dark and horizontal strands pale so that each square {\em cell} of an array shows which strand goes over the other strand. (See Figures~\ref{fig:66abc}a, \ref{fig:81a73b}a). The arrays represent infinite periodic patterns. The attraction of the study has been mainly the visual appreciation of these arrays called {\em designs} with normal colouring. They have been catalogued%
\footnote{The catalogue numbers, which will identify many of the fabrics in the paper, were invented by Gr\"unbaum and Shephard for their first catalogue. There are three numbers if the third is known. The first is the common period along the strands of the fabric, called by them {\em order}. The second is `the decimal equivalent of the [order]-digit binary number obtained by interpreting each [dark cell] as 1 and each [pale cell] as 0. For an equivalence class of sequences, the {\em binary index} is defined as the minimum of the binary indices of all the sequences in the class.' \cite{GS1985} The third is an arbitrarily assigned serial number of fabrics with the same binary index if all are known. It is by no means always 1; cf.~Figure~\ref{fig:81a73b}.} 
(\cite{GS1985}, \cite{GS1986}, and \cite{HT1991}) and some counted \cite{GS1985}. In 1989 I decided that it would be interesting to investigate colouring the strands of isonemal fabrics periodically but not normally so that the symmetries of the fabric's design are {\em colour symmetries} of the coloured result, a {\em pattern}. This is called {\em perfect} colouring. I learned that this was only possible if parallel colours were arranged in stripes periodically, colour${}_1$, colour${}_2$, \dots , colour${}_p$, colour${}_1$, \dots \cite[Thm 1]{HT1991}, but with a co-author was able to do little more even for two colours in ignorance of the possible symmetry groups.
\begin{figure}%1not66
\[
\begin{array}{ccc}
%\epsffile{66a.eps} &\epsffile{66b.eps} &\epsffile{66c.eps}\\%30
\epsffile{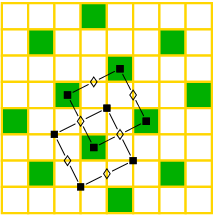} &\epsffile{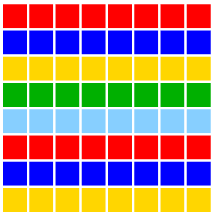} &\epsffile{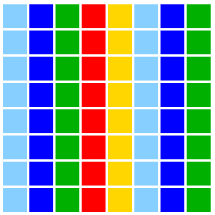}\\%35%

\mbox{(a)} &\mbox{(b)} &\mbox{(c)}
\end{array}
\]
\caption{a. Normal colouring of the (5,3) satin 5-1-1 \cite[Figure 3a]{Thomas2013} with alternative $G_1$ lattice units. b. Five-colouring by thin striping. Obverse. c. Reverse. The black marks are explained in Section \ref{sect:sym}.}%fig:66abc
\label{fig:66abc}
\end{figure}

Taking up the symmetry-group challenge, Richard Roth did a full determination of the symmetry groups of isonemal designs \cite{Roth1993} and also wrote a paper \cite{Roth1995} on two-colouring, indicating which symmetry groups went with perfect two-colorability. This allowed me to return to my quest for the interestingly symmetrical colorful weaving patterns that are displayed in this paper. There have been three stages. First I converted Roth's classification of 39 symmetry-group types, which necessarily overlap, to a taxonomy of 39 corresponding fabric species (with refinements), which do not overlap \cite{Thomas2009, Thomas2010a, Thomas2010b}.\begin{figure}%8not73
\[
\begin{array}{cc}
\epsffile{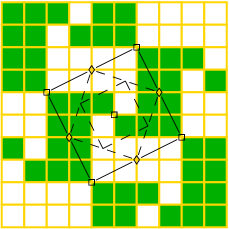} &\epsffile{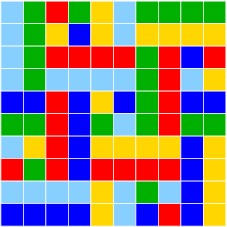}\\%30% p.2

\mbox{(a)} &\mbox{(b)}
\end{array}
\]
\caption{a. Normal colouring of Roth's example of species $33_3$, 10-55-2, with lattice unit of $G_1=H_1$ outlined and lattice units of levels 1 and 2 dashed \cite[Figure 11b]{Thomas2010b}. b. Five-colouring by thin striping.}%fig:81a73b}
\label{fig:81a73b}
\end{figure}
Then I studied two-colouring \cite{Thomas2011, Thomas2012}. Then I studied the less interestingly symmetric multi-coloured patterns \cite{Thomas2013, Thomas2014}. This paper concludes the project with a study of what I regard as the most interestingly symmetric patterns. The respect in which I find these patterns more attractively symmetric is their quarter-turn symmetry, which allows more interesting (to me) motifs than half-turn or no-turn symmetries. Some designs with quarter-turn symmetry have the additional feature that they can be used to weave cubes --- not my idea \cite{HP2010,Pedersen1981,Pedersen1983,Thomas2010b,Thomas2012}, a bonus that I exemplify (Figures~\ref{fig:colCubeFront}, \ref{fig:colCubeBack}) but have not pursued even to the point of proposing a definition of what an isonemal cube should be.

The organization of the paper is as follows. Section~\ref{sect:prelim} introduces notation. Section~\ref{sect:sym} discusses symmetry. Section~\ref{sect:isonemal} discusses isonemal fabrics. Section~\ref{sect:col} discusses colouring other than normal. Section~\ref{sect:results} presents the main results on 5-colouring with (5,3)-satin redundancy. Section~\ref{sect:larger} extends the results of Section~\ref{sect:results} to higher orders. Section~\ref{sect:nonsatin} introduces a non-twilly redundancy configuration for 6-colouring. Section~\ref{sect:nonplanar} uses the weaving designs of Sections~\ref{sect:results} and \ref{sect:nonsatin} to weave closed surfaces.

\section{Preliminaries}%2
\label{sect:prelim}

\noindent
Mathematical weaving concerns the properties of certain diagrams that are meant to represent finite physical objects that are, loosely speaking, woven. This includes both cloth-like material made of fibres roughly circular in cross section and caning or basketry made of material wider than thick. A convention is that the {\em strands} that are woven are edgeless planar stips of uniform width and zero thickness of the same colour on both sides. While weaves more than 2 strands thick and with non-perpendicular strands have been studied \cite{GS1988}, this paper concerns perpendicular strands almost covering the plane twice, interleaved so that one strand covers the other in what I refer to as square {\em cells}. The idealized visual appearance of the weave therefore consists of a finite matrix of coloured cells intended to represent an infinite array following the pattern indicated by the diagram. (See Figure~\ref{fig:66abc}a, which has features beyond just its array of cells and whose caption uses terms that will be defined later. This is by no means the only reference to this figure.) This infinity allows ignoring boundaries of the weave as well as of the strands -- especially important in considering symmetries. One can think of the strands as adhering to the two sides of a plane except when passing through it between cells. That plane will be refered to as the {\em plane of the fabric} if the cells represent a fabric. In order to represent a fabric, the standard requirement \cite{GS1980} is that the weave represented {\em hang together}, not be separable into two or more layers, every strand of one layer passing over every strand of the other layer. If a weave hangs together, then it is a {\em fabric}; otherwise it is called a {\em pre-fabric}. Much of the literature on fabrics applies to pre-fabrics because hanging together is irrelevant to what is discussed. This paper will use fabrics as examples, but much of what is said would apply to pre-fabrics if one simply overlooks that they come apart. Their doing so would make the definition of the plane of a pre-fabric problematic.

\begin{figure}%12not77
\[
\begin{array}{cc}
\epsffile{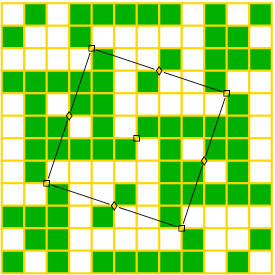} &\epsffile{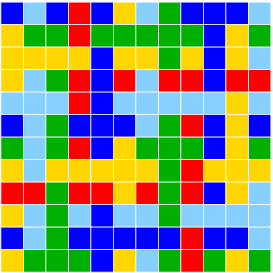}\\%30% p.3 middle

\mbox{(a)} &\mbox{(b)}
\end{array}
\]
\caption{a. Order-20 example of species $33_4$ with lattice unit of $G_1$ outlined \cite[Figure 13b]{Thomas2010b}. b. Five-colouring by thin striping.}%fig:76ab}
\label{fig:76ab}
\end{figure}

For the sake of seeing them clearly, all fabrics are {\em coloured fabrics}, that is with strands that exhibit the same colour on both sides. The important convention of weaving diagrams uses the colour of strands, top-to-bottom strands, {\em warps}, dark and side-to-side strands, {\em wefts}, pale to signal in a diagram called a {\em design} which strand is over which strand from an arbitrary point of view well off the plane.%
\footnote{Matrices of numbers were used in the early paper \cite{HT1984}, which has errors as a result.} 
This is {\em normal colouring}, that of all the diagrams here in green and white. I refer to the side of the fabric seen thus as its {\em obverse} and the other side its {\em reverse}. When the reverse is shown, it is shown as it would look in a mirror set up on the back side of the fabric. This makes the cells of obverse and reverse correspond. While the reverse of a design is just its colour complement and so of no interest, the reverse of a weave coloured in any other way may differ seriously. Examples are Figures~\ref{fig:66abc}b and c, \ref{fig:68abc}b and c, \ref{fig:67abc}b and c, \ref{fig:82ab}, and \ref{fig:83ab}. There is more to do before we colour differently.

\begin{figure}%4not69
\[
\begin{array}{cc}
\epsffile{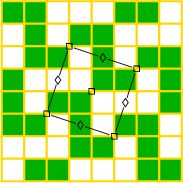} &\epsffile{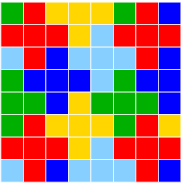}\\%30% middle p. 1

\end{array}
\]
\caption{a. Roth's example of species $34$, 10-107-1, with lattice unit of $G_1$ marked \cite[Figure 8a]{Thomas2010b}. b. Five-colouring by thin striping.}%fig:69ab}
\label{fig:69ab}
\end{figure} 

\section{Symmetry}%3
\label{sect:sym}

\noindent
Much of the interest in weaving is symmetry \cite{GS1980}. Gr\"unbaum and Shephard define a symmetry of a fabric $F$ with plane $E$ as an isometry that maps each strand of $F$ into a strand of $F$ and either preserves all or reverses all of the rankings of the strands. A {\em symmetry} consists therefore of an isometry within $E$ (translation, rotation, reflection, or glide-reflection) possibly combined with three-dimensional reflection $\tau$ in the plane $E$, which reverses the ranking of the strands. The symmetries together form the {\em symmetry group} $G_1$ of the fabric under composition. The isometries alone form the {\em isometry group} $G_2$. Those symmetries that do not reverse the sides of the fabric form a subgoup $H_1$ of $G_1$ and $G_2$ of {\em side-preserving} symmetries. $H_1$ is either all of $G_1$ or is normal of index 2 because there is some {\em side-reversing} symmetry $\sigma$ such that $G_1 = H_1 \cup \sigma H_1$ \cite{GS1988}. The symmetries of $F$ determine the symmetries of its design. For the most part they are the same. One to watch is the quarter turn, which will have great importance in this paper. If one rotates the design of Figure~\ref{fig:66abc}a or \ref{fig:81b72b}a through a quarter turn at any of the centres indicated by the tiny black {\em boxes}, it is invariant, but it is not the design of the represented fabric rotated a quarter turn. This is because of the convention that warps are dark and wefts pale. The rotation has interchanged which is which, and so the design of the rotated fabric is the colour complement of the figure. We indicate with the little black box both the centre of the rotation and that $\tau$ has to be added to the rotation to restore the design. In this case a symmetry is a quarter turn composed with $\tau$. When a quarter turn does not need to be `corrected' in this way to be a symmetry of the design, it is represented by a hollow box where its centre is, as in Figures~\ref{fig:81a73b}a, \ref{fig:76ab}a, and \ref{fig:69ab}a. The other similar operations, which will be indicated only when they are symmetry operations, are half turns. Their centres are indicated by diamonds, black with they are combined with $\tau$ and hollow when $\tau$ is not needed. The latter are more common, the former not occurring until Figure~\ref{fig:75a}. Indicated centres of quarter turns, with or without $\tau$, are also centres of half turns without $\tau$, therefore not so indicated.

\begin{figure}%7not72
\[
\begin{array}{cc}
\epsffile{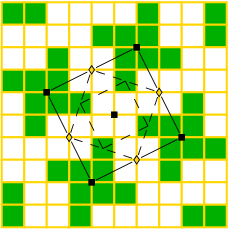} &\epsffile{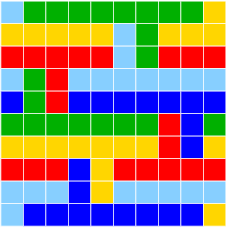}\\%30% p.2

\mbox{(a)} &\mbox{(b)}
\end{array}
\]
\caption{a. Roth's example of species $35_3$, 10-39-1, with lattice unit of $G_1=H_1$ outlined and lattice units of levels 1 and 2 dashed \cite[Figure 11a]{Thomas2010b}. b. Five-colouring by thin striping. Because quarter turns are all side-reversing, {\em on each side} only half-turn symmetry of the pattern is visible. Likewise Figures~\ref{fig:77ab}b, \ref{fig:68abc}b and c, \ref{fig:70ab}b, \ref{fig:71ab}b, and \ref{fig:89ab}a and b.}%fig:81b72b}
\label{fig:81b72b}
\end{figure}

Translations as symmetry operations are not indicated on diagrams except incidentally because they can be seen. Mathematical weaving has been limited to designs periodic in two different directions. Because the sequence of pale and dark cells along each strand is periodic, I use the term {\em order} for those linear periods \cite{GS1980} except in the next sentence. Period parallelograms can be either square comprising where a linear period of consecutive strands meet a linear period of consecutive perpendicular strands, for example the whole of Figure~\ref{fig:81a73b}, 10 strands by 10 strands, or oblique, as outlined in the same figure with a thin black line. In either case those of interest have corners that form a lattice under the translation subgroup of the symmetry group; such parallelograms are called {\em lattice units}. The lattice units for the symmetries in Figures~\ref{fig:66abc} to \ref{fig:75a} and many beyond, which I outline with thin lines, are oblique squares (with centres of quarter turns at corners) on the hypotenuses of right-angled triangles with horizontal and vertical sides of length $M_i$ and $N_i$ \cite{Roth1993}, where $i = 1$, 2, 3, 4, or 5 is called the {\em level} of the lattice unit. A lattice unit of each level 2, 3, 4, or 5 is based on that of the next lower level by doubling of size. In \cite{Thomas2010b}, certain conventions for drawing these lattice units were established. Level-1 lattice units are bounded by lines in Figures~\ref{fig:66abc}a and \ref{fig:67abc}a, where it is shown that they can be drawn with either a cell-centre or cell-corner box in the centre representing the same lattice of quarter-turn centres ($M_1=2, N_1 =1$). Figure~\ref{fig:68abc}a shows another level-1 lattice unit ($M_1=3, N_1 =4$). This indicates that the frequent use of $M_1=2, N_1 =1$ is just to control size. Larger possibilities are discussed in Section~\ref{sect:larger}. It is the defining characteristic of level 1 that $M_1$ and $N_1$ are relatively prime with one odd and one even. Each higher-level lattice unit is drawn as a square escribing origami-style the square of the next level down, whether the inner square is drawn or not. Figures~\ref{fig:69ab}a, \ref{fig:70ab}a, and \ref{fig:71ab}a have level-2 lattice units illustrated without the level-1 square inscribed. Figures~\ref{fig:81a73b}a and \ref{fig:81b72b}a illustrate level-3 lattice units with the nested squares of levels 1 and 2 inscribed dashed. Figures~\ref{fig:81a73b}a, \ref{fig:69ab}a, \ref{fig:70ab}a, \ref{fig:85a74b}a, and \ref{fig:80} are based on the upper level-1 unit of Figure~\ref{fig:66abc}a. Figure~\ref{fig:71ab}a (level 2) is based on the lower level-1 unit of Figure~\ref{fig:66abc}a. Beyond level 2, only the former kind is possible \cite{Thomas2010b}. That is, lattice units of levels 3 to 5 have cell corners at their centres. Designs of level 4 are illustrated in Figures~\ref{fig:76ab}a, \ref{fig:77ab}a, \ref{fig:75a}, \ref{fig:90}, and \ref{fig:98}.
Level 4 is as far as $G_1$ lattice units for isonemal prefabrics can go \cite{Thomas2010b}.

\begin{figure}%13not78
\[
\begin{array}{cc}
\epsffile{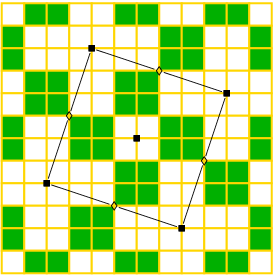} &\epsffile{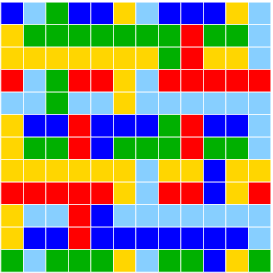}\\%30% top. p.3

\mbox{(a)} &\mbox{(b)}
\end{array}
\]
\caption{a. Fabric 10-85-1 of Figure~\ref{fig:71ab}a doubled, of species $35_4$, with lattice unit of $G_1$ outlined \cite[Figure 13a]{Thomas2010b}.  b. Five-colouring by thin striping.}%fig:77ab}
\label{fig:77ab}
\end{figure}

There is a reason to look also at level-5 lattice units. Important for some purposes are the $\tau$-free symmetry operations, which compose the side-preserving subgroup $H_1$ of the symmetry group $G_1$. Since the operations with black symbols in Figure~\ref{fig:90}, for example, all involve $\tau$, the side-preserving symmetry operations are the \whboxx s and $\blacksquare^2 = \diamondsuit$, and the lattice unit that they define is of level 5 (dashed like the units of lower levels) surrounding the lattice unit of the symmetry group of level 4. Likewise the side-preserving subgroup in Figure~\ref{fig:85a74b}a consists of \whboxx s and $\blacksquare^2 = \diamondsuit$ and is at level 4, while the symmetry group's lattice unit is at level 3. 
Figure~\ref{fig:80}, with $G_1$ at level 3, on the other hand has the level-4 lattice illustrated merely for the sake of illustrating a set of nested lattice units. Most of these illustrations are based on the level-1 lattice unit with $M_1=2$ and $N_1=1$ for the sake of making them as small as possible. $M_2= M_1 + N_1$ and $N_2= M_1 - N_1$, both odd, are always also relatively prime \cite{Thomas2010b}. From level three on the pattern is simple and the same: $M_3 = 2M_1$ and $N_3 = 2N_1$, $M_4 = 2M_2$ and $N_4 = 2N_2$, such lattice units being four times the area of that two levels down. The important designs should now make sense and be distinguishable. Besides the choice of which cells are pale and dark, Figures~\ref{fig:69ab}a (species 34) and \ref{fig:70ab}a (species 36) differ in which operations occur at the corners and centres of the lattice units. Figures~\ref{fig:70ab}a ($36_2$) and \ref{fig:71ab}a ($36_s$), on the other hand, differ in where the same operations occur, the latter taking advantage of the possibility noted above that level-2 lattice units can be centred on cell centres as well as cell corners. 

\begin{figure}%7not68
\[
\begin{array}{ccc}
\epsffile{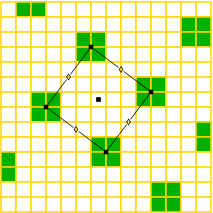} &\epsffile{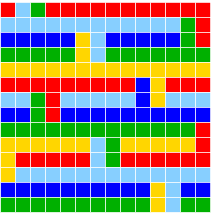} &\epsffile{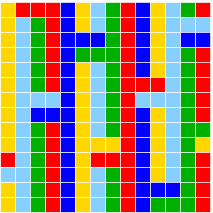}\\%20,20,20%p.6 was 25

\mbox{(a)} &\mbox{(b)} &\mbox{(c)}
\end{array}
\]
\caption{a. Order-25 fabric of species $36_1$ with lattice unit of $G_1$ marked. b. Five-colouring by thin striping. Obverse. c. Reverse. Cf.~Figure~\ref{fig:80}.}%fig:68abc }
\label{fig:68abc}
\end{figure}

The other symmetry operations, reflections and glide-reflections, are indicated in my diagrams by locating their axes with thick solid and broken lines respectively. Figures~\ref{fig:Perp}a and \ref{fig:5cXX}b have both. Mirrors (axes of reflection) always have the effect of interchanging warps and wefts and so of reversing the colours of the design. As this is not wanted for mirror symmetry, $\tau$ is always needed to restore the colours of the design. Glide-reflections are not so simple. Those indicated in Figure~\ref{fig:6-1-1} take diagonally adjacent pairs of dark cells to nearby similar pairs, but because glide-reflection also reverses warps and wefts, $\tau$ is needed to make the image pair dark rather than pale. These are {\em side-preserving glide-reflections}. On the other hand, the glide-reflections in Figure~\ref{fig:5cXX}a take pale areas to dark areas and vice versa. No $\tau$ is needed, and so these {\em side-reversing glide-reflections} are indicated by hollow broken lines. Figure~\ref{fig:5cXX}b shows that the two sorts of glide-reflection axes can occur parallel in the same design and also indicates the final notational convention for axes. Perpendicular to the axes already explained are (in $H_1$) axes of both side-preserving glide-reflection interchanging pale and dark zig-zags and (in $G_1$ but not $H_1$) of mirror symmetry within each zig-zag and the design as a whole. This double nature is indicated by breaking the filling of the mirrors. Thin lines in this figure and elsewhere just indicate boundaries of an area of interest, in this case and Figure~\ref{fig:6-1-1} lattice units.

There is a subtlety to glide-reflections lacking in mirrors. In order to be a symmetry operation, a diagonal mirror has to pass through every cell from corner to corner. Not so diagonal axes of glide-reflection. While such axes in all diagrams in this paper do run through cells from corner to corner, there are several subspecies characterized by the axes running through cells from mid-side to adjacent mid-side. This is possible because the cells through which an axis passes are translated along the axis by the glide to other cells where they can fit reflected. I refer to this location as {\em not in mirror position}. This possibility will matter later but only by being excluded.

\begin{figure}%8not70
\[
\begin{array}{cc}
\epsffile{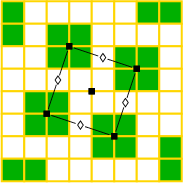} &\epsffile{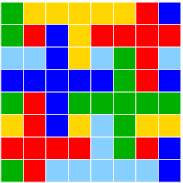}\\%30% bott. p. 1

\mbox{(a)} &\mbox{(b)}
\end{array}
\]
\caption{a. Roth's example of species $36_2$, 10-27-1, with lattice unit of $G_1$ marked \cite[Figure 8b]{Thomas2010b}. b. Five-colouring by thin striping.}%fig:70ab}
\label{fig:70ab}
\end{figure}

With symmetry operations and their groups defined, it makes sense to introduce the seminal innovation of Gr\"unbaum and Shephard \cite{GS1980}. They realized that weaving would be both more attractive and mathematically interesting if attention were focused on {\em isonemal} fabrics, which are those on which the symmetry group is transitive on the strands. A strand can be transformed to any other strand by a symmetry operation. Such fabrics are what they listed in their two catalogues \cite{GS1985,GS1986}. Richard Roth gave complete algebraic analysis in \cite{Roth1993} of the symmetry groups of isonemal fabrics of order greater than 4. (Fabrics of the smallest orders are anomalous in a variety of ways.) The 39 types into which Roth divided the symmetry groups necessarily overlap (some are subgroups of others), and I have found it useful instead to divide the fabrics into non-overlapping {\em species} according to which group type applies, several of them subdivided. The first ten species have only parallel axes of symmetry \cite{Thomas2009}: 1, side-preserving glide-reflections; 2, side-reversing glide-reflections; 3 and 4, both sorts of glide-reflection axes differently positioned; 5, mirrors; 6 and 7, mirrors in $G_1$ that turn into axes of side-preserving glide-reflection axes in $H_1$, differently spaced; 8, reflections and side-reversing glide-reflections; 9 and 10, reflections and side-preserving glide-reflections, differently spaced. Species 11 to 32 \cite{Thomas2010a} have symmetry groups with perpendicular axes of symmetry. Fabrics in some of them will be mentioned in Section~\ref{sect:nonsatin}, but it is not necessary to discuss them all. The species of chief importance in this paper are 33 to 39, all with quarter-turn symmetry and no axes of symmetry (called by crystallographers and Roth $p4$), as the main results are about fabrics in those species.

\begin{figure}%6not71
\[
\begin{array}{cc}
\epsffile{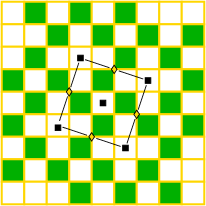} &\epsffile{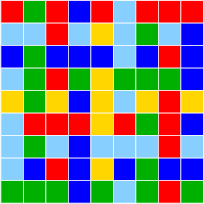}\\%30% p.2

\mbox{(a)} &\mbox{(b)}
\end{array}
\]
\caption{a. Example of species $36_s$, 10-85-1, with lattice unit of $G_1$ marked \cite[Figure 9]{Thomas2010b}. b. Five-colouring by thin striping, the same pattern as Figure~\ref{fig:67abc}c.}%fig:71ab}
\label{fig:71ab}
\end{figure}

\section{Isonemal fabrics, mainly species 33 to 39}%4
\label{sect:isonemal}

\noindent
Let us look at the species of particular interest \cite{Thomas2010b}. Species 33 is divided into 2 subspecies (indicated like all subspecies by subscripts), $33_3$ and $33_4$, depending on whether their lattice units, having {\em side-preserving quarter-turn centres \whbox at corners and centre}, are at level 3 or 4. See Figures~\ref{fig:81a73b}a and \ref{fig:76ab}a. The levels and italicized feature are definitive of species 33. Species 35 is divided like 33 into 2 subspecies, $35_3$ and $35_4$ according to the level of lattice unit, having {\em side-reversing quarter-turn centres \blbox at corners and centre}. See Figures~\ref{fig:81b72b}a and \ref{fig:77ab}a. Again the levels and italicized feature are definitive of species 35. Because of their levels the quarter-turn centres in these species must occur at corners of cells. That is not the case at levels 1 and 2; centres can be at cell centres too. And so the subspecies of species 36 are more delicately differentiated. Species 36 is divided into 3 subspecies, having almost the same feature description as 35 but at lower levels: at level 2 subspecies $36_2$ (see Figure~\ref{fig:70ab}a) has the centres at cell corners like 33 and 35, but subspecies $36_s$ (see Figure~\ref{fig:71ab}a) has its quarter-turn centres at the centres of cells. The point made in Figure~\ref{fig:66abc}a is that the corners and centre of level-1 lattice units can be differently placed. If the lattice-unit corners are at cell corners, then its centre is at a cell centre and vice versa. Subspecies $36_1$ has lattice-unit corners and centres in those positions -- the same kind of lattice units differently placed (see Figures~\ref{fig:68abc}a and \ref{fig:66abc}a). Some of the information of this paragraph -- and more to come -- is summarized in Table~1.%\ref{tab:boxes}. 
There is one more species with the same kind of quarter-turn centre at corners and centre of lattice units. Species 34 has, like 33, {\em side-preserving quarter-turn centres \whbox at corners and centre} but at level 2. See Figure~\ref{fig:69ab}a, where the level-2 lattice unit, like those in Figure~\ref{fig:70ab}a and \ref{fig:71ab}a, is a dashed square in Figure~\ref{fig:81a73b}a. One naturally wonders whether the side-preserving quarter-turn centres like those of species 34 can be placed all in the centres of cells; this is the sort of thing Roth investigated. In this case there is an easy explanation: side-preserving quarter-turn centres can't occur in the centre of a cell on account of their effect on the cells of their location. (The cell must remain the same colour without the use of $\tau$ after a quarter turn, which is self-contradictory.)

\begin{table}
\begin{tabbing}
Species \hskip 5 pt\= 39\hskip 5 pt \= $36_1$\hskip 5 pt\= $36_2$\hskip 5 pt\= $36_s$\hskip 5 pt\= 34 \hskip 5 pt \= 38\hskip 5 pt\=$33_3$\hskip 5 pt\=$35_3$\hskip 5 pt\=$33_4$\hskip 5 pt\=$35_4$\hskip 5 pt\=37\\
\phantom{m}\\
Level \> 1  \> 1 \> 2 \> 2 \> 2  \> 3 \>3 \>3 \>4 \>4 \>4\\
Unit\\
Centre \> \whbox  \> \blbox \> \blbox \> \blbox \> \whbox  \> \whbox \>\whbox \>\blbox \>\whbox \>\blbox \>\whbox\\
Unit\\
Corners \> \blbox  \> \blbox \> \blbox \> \blbox \> \whbox  \> \blbox \>\whbox \>\blbox \>\whbox \>\blbox \>\blbox\\
Mid-\\
Side \> \diab  \> \dia \> \dia \> \dia \> \dia  \> \diab \>\dia \>\dia \>\dia \>\dia \>\diab\\
\end{tabbing}
%\vskip - 18 pt
\noindent Table 1. Correspondence among species, levels, and symmetry operations (as illustrated).
\label{tab:boxes}
\end{table}

The two identical columns in Table~1 %\ref{tab:boxes}
reveal that two subspecies, $36_2$ and $36_s$, have the same symmetry group $G_1$, abstractly -- even geometrically -- considered, whose difference lies in their distinct placement in the cells of the design (Figures~\ref{fig:70ab}a and \ref{fig:71ab}a).

\begin{figure}%10not75
\[
\epsffile{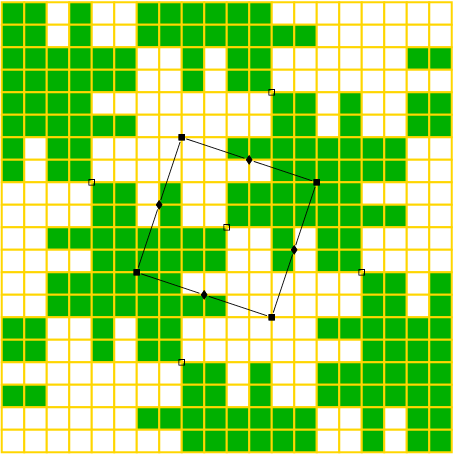}%30%
\]
\caption{Roth's species-37 example 20-3391 \cite[Figure 7]{Roth1993}, with one lattice unit of $G_1$ outlined and the corners of one larger lattice unit of $H_1$ marked.}%fig:75a}
\label{fig:75a}
\end{figure}

We now move on to side-preserving and side-reversing quarter turns mixed. At level 1, where the lattice unit can be of either sort as with species $36_1$ in Figure~\ref{fig:66abc}a, the species is 39 (see Figure~\ref{fig:67abc}a, where both sorts of $G_1$ lattice unit are illustrated again). At level 3 the species is 38 (see Figure~\ref{fig:85a74b}a), and at level 4 it is 37 (see Figure~\ref{fig:75a}). In species 37 and 38 there are also alternatives; Figure~\ref{fig:75a} could just as well be drawn with centres of side-reversing quarter turns \blbox at the centres of the lattice units and the centres of side-preserving quarter turns \whbox at their corners. Likewise Figure~\ref{fig:85a74b}a of species 38. And this arbitrariness matters. As we shall see, two ways of doing things result (Thm 2).

\begin{figure}%11not76
\[
\epsffile{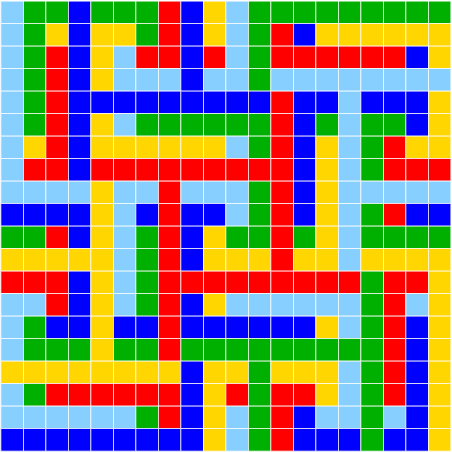}%30%p.3
\]
\[
\epsffile{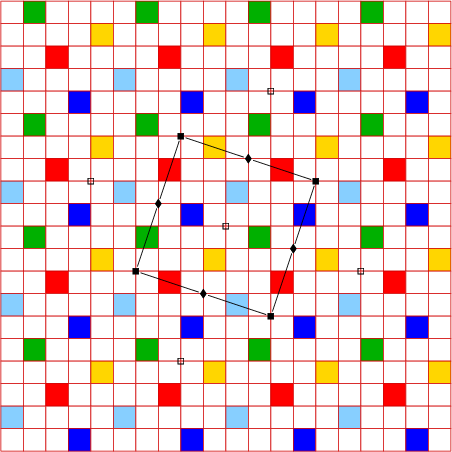}%30
\]
\caption{Five-colouring by thin striping of fabric of Figure~\ref{fig:75a}, and below it its redundant cells with symmetries from Figure~\ref{fig:75a}.}%fig:75b}
\label{fig:75b}
\end{figure}

Having looked at the species of main interest here, we can look at some families of examples beyond the figures so far referred to. The simplest isonemal fabrics -- introduced by Gr\"unbaum and Shephard \cite{GS1980} -- are twills, a {\em twill} being defined as having a design in which each row looks like the row below it shifted by one cell (all of these shifts being in the same direction). As a consequence the columns look the same but shifted too. The simplest twills have one dark cell per order length of the strand, and there is only one such twill per order. No twill is needed in this paper. The second-simplest configuration is what is called the {\em doubled twill}, each strand of the twill being replaced by two strands behaving the same and therefore each cell of the twill being replaced by a {\em block} of four cells of the same colour. This is no longer a twill itself, but there is still one dark block per order. No doubled twill is needed here, but the process of doubling and its result is mentioned in a number of places. See Figures~\ref{fig:77ab}, \ref{fig:80}, and \ref{fig:89ab}a.

\begin{figure}%12not74
\[
\begin{array}{cc}
\epsffile{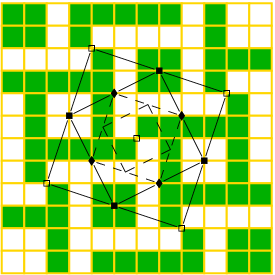} &\epsffile{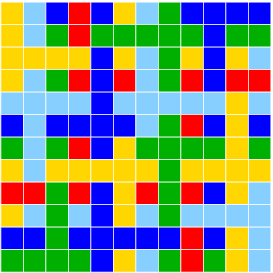}\\%30% p.2

\mbox{(a)} &\mbox{(b)}
\end{array}
\]
\caption{a. Roth's species-38 example 20-19437 \cite[Figure 8]{Roth1993}, with one lattice unit of $G_1$ and the larger lattice unit of $H_1$ outlined and lower-level lattice units dashed \cite[Figure 12a]{Thomas2010b}. b. Five-colouring by thin striping.}%fig:85a74b}
\label{fig:85a74b}
\end{figure}

The next-simplest fabrics are the satins, the other kind of fabric discussed in Gr\"unbaum and Shephard's initial paper \cite{GS1980}. Another name for a {\em satin} is `rearranged twill' because to make the next row of a satin one displaces each row, having one dark cell per order length, by more than the one that would make a twill, always in the same direction. The simplest, denoted (5,3), has order 5 and displacement equivalently 2 or 3. Satins are of three kinds depending on how the two-dimensional arrangement of the dark cells strikes one's eye. If they are the corners of squares as in Figure~\ref{fig:66abc}a, then the satin is called {\em square}, and if they make rhombs or rectangles they are called {\em rhombic} or {\em rectangular}. Rhombic satins are of species 28 with no quarter-turn symmetry \cite[Thm 2]{Thomas2010a}. Rectangular satins are all of species 26 with no quarter-turn symmetry \cite[Thm 1]{Thomas2010a}. Square satins of odd and even orders fall into species $36_1$ and $36_s$ respectively \cite{Thomas2010b}. It is unfortunate that there are no satins of order 3, 4 or 6 \cite{GS1985}, but order 5 is small enough to give attractive results. The satin that is useful for this purpose, (5,3), is square and of odd order. The restriction to square satins, and so to species $36$ is obvious, a rhomb or rectangle with the correct symmetry is actually a square. The next odd square-satin order after 5 is 13 (17, 25, 29, 37,\dots). Oddness of order is not necessary, but the smallest square satin of even order is (10,3) of species $36_s$.

It is perhaps useful to note the many different squares here. In order of size, there are the hollow and filled boxes locating centers of quarter turns, cells, oblique lattice units of species 33 to 39, and order-by-order lattice units like Figure~\ref{fig:81a73b}a. There are also the eye-catching squares that give square satins their name, in Section~\ref{sect:col} lattices of redundant cells of the same colour, in Section~\ref{sect:nonplanar} period parallelograms that can be made into flat tori, and finally in Figures~\ref{fig:colCubeFront} and \ref{fig:colCubeBack} the faces of cubes, but these are all lattice units.

\section{Colouring}%5
\label{sect:col}
\begin{figure}%2not67
\[
\begin{array}{ccc}
\epsffile{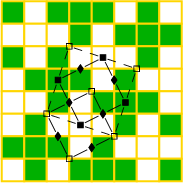} &\epsffile{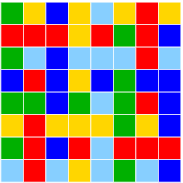} &\epsffile{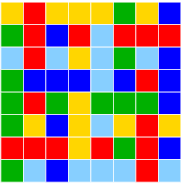}\\%30,25,25%now 30,30,30.

\mbox{(a)} &\mbox{(b)} &\mbox{(c)}
\end{array}
\]
\caption{a. Roth's example of species 39, 10-93-1 with the larger lattice unit of $H_1$ and two different lattice units for $G_1$ marked \cite[Figure 7]{Thomas2010b}. b. Five-colouring by thin striping. Obverse. c. Reverse. The same pattern as Figure~\ref{fig:71ab}b.}%fig:67abc}
\label{fig:67abc}
\end{figure}

\noindent
Except for such information as is needed about species 11 to 32 in Section~\ref{sect:nonsatin}, which will be provided there, that is all that is needed from the classification of isonemal fabrics. We can now turn to colouring with more than the coding colours, which will still be used for design-identification purposes. All weaving diagrams are coloured; otherwise they would be blank. They are coloured normally to indicate their topology, but the strands can be coloured any way at all for the sake of appearance. A standard definition in the business of colour is {\em perfect colouring} of an object, which is a choice of colours for subunits -- in this case strands -- that makes all symmetries of the object permute the colours of the subunits coherently. If one red strand is transferred to a yellow strand, then all red strands are transferred to yellow strands; these are called {\em colour symmetries}. The simplest and most attractive colouring is perfect colouring in which all symmetries are colour symmetries; in his second weaving paper \cite{Roth1995}, Roth studied chromatic 2-colouring as well, which is less constrained. His papers were a response to my initial investigation with J.A.~Hoskins of perfect colouring with two colours \cite{HT1991}. Perfect colouring is easy if all strands are different colours, but one wants to use a finite number of colours. We learned that this was only possible if strands were arranged in stripes periodically, either {\em thinly} (colour${}_1$, colour${}_2$, \dots , colour${}_p$, colour${}_1$, \dots) or {\em thickly} (colour${}_1$, colour${}_1$, colour${}_2$, colour${}_2$, \dots , colour${}_p$, colour${}_p$, colour${}_1$, \dots) \cite[Thm 1]{HT1991}. The warps and wefts could use different sets of $p$ colours; it is an aesthetic decision to make them the same here to minimize the number of colours and produce monochrome motifs. Every diagram not coloured green and white in this paper has its strands coloured with a colour that can be easily determined by looking at the diagram. Whenever there are three consecutive cells of one colour, then that warp or weft has that colour. Two adjacent cells of a single colour do not indicate that if striping is thick. Figure~\ref{fig:82ab}b has blocks of four cells in light and dark blue, red, and green that indicate nothing, but the pinwheel shapes indicate intersecting pairs of thick stripes of the same colour. It is the four cells in a row that indicate the colour of each strand of the thick stripes. The stripe colours are more obvious in Figures~\ref{fig:82ab}a, \ref{fig:83ab}, and \ref{fig:87ab}a, less obvious in \ref{fig:87ab}b. There is something important to pay attention to about the blocks where stripes cross whether thick or thin.

\begin{figure}%15not80
\[
\epsffile{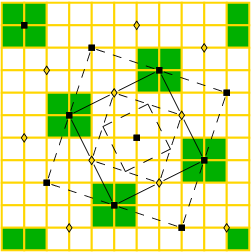}
\]
\caption{Fabric 10-3-1, the $(5,3)$ satin (Figure~\ref{fig:66abc}a) doubled, its dark blocks being one set of redundant blocks for thick colouring by 5 colours. Its level-3 $G_1$ lattice unit is outlined, and the corresponding lattice units of levels 1, 2, and 4 are dashed.}%fig:80}
\label{fig:80}
\end{figure}

Where stripes of the same colour cross it does not matter which of the two is the `visible' one; therefore how such cells are woven has no influence on the appearance of the woven material.
Such cells are called {\em redundant} and the other cells {\em irredundant}. A lack of redundancy is achieved by using different sets of colours for wefts and warps. Normal colouring is a special case of this, but it can be done with any larger even number of colours divided equally between warps and wefts. Interest in such patterns is limited by the fact that the only way that cells of the same colour can form a motif by adjacency is by being in a line \cite[Figures 7, 8]{Thomas2014}. When the striping is thick, motifs can form from the rows of adjacent same-colour pairs, which allows some interesting variations \cite[Figures 4, 5]{Thomas2014}. Visual interest is actually increased by having strands in different directions be of the same colour, and redundancy is a consequence. With only two colours, half of the cells are redundant, so that all of the patterns for a given colouring are half the same, but the other half gives room for much variation. As the number of colours $p$ increases, the proportion of cells $1/p$ that are redundant, whether the stripes are thin or thick, diminishes, and so there is more freedom for variation. Unfortunately this aesthetic benefit is countered by the disadvantage of more and more colours. I find six colours the absolute maximum that is attractive, something that will differ from person to person. 

\begin{figure}%16not82
\[
\begin{array}{cc}
\epsffile{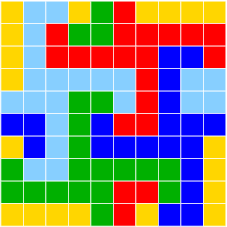} &\epsffile{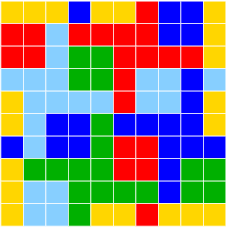}\\%30% p.1

\mbox{(a)} &\mbox{(b)}
\end{array}
\]
\caption{a. One 5-colouring by thick striping of 10-55-2 (Species $33_3$, Figure~\ref{fig:81a73b}a). a. Obverse. b. Reverse.}%fig:82ab}
\label{fig:82ab}
\end{figure}

No matter how many redundant cells of a colouring there are or how they are distributed, they will form a configuration. Because to be colour symmetries the symmetries of the fabric coloured must each carry redundant cells to redundant cells and collectively they must be transitive on the strands \cite[Lemma 4.6]{Thomas2013}, the redundant cells of a perfect colouring of an isonemal fabric must themselves be the dark cells of an isonemal design. Call it the {\em derived prefabric}. In order to state this circumstance with precision and generality it is necessary to use the isometry group $G_2$ of a prefabric. As explained above, elements of the group $G_1$ consist of isometries in the plane and maybe $\tau$. They are therefore of the form $(i,r)$ where $i$ is an plane isometry and $r$ is either the identity reflection $e$ in the plane or $\tau$, and composition is of the direct-product type. These elements can all be projected onto $G_2$, the set of elements of the form $(i,e)$, which are not necessarily symmetries of the fabric but could be its side-preserving subgroup $H_1$ if there were no symmetries of the form $(i,\tau)$. A necessary condition on the derived prefabric of a colouring of a fabric for the colouring to be perfect is that $G_2$ of the fabric coloured must be a subgroup of $G_2$ of the derived prefabric \cite{Thomas2013}. Any isometry that is the isometric part of a symmetry operation must be the isometric part of a symmetry of the derived prefabric, as only location of redundant cells matters.

\begin{figure}%17not83
\[
\begin{array}{cc}
\epsffile{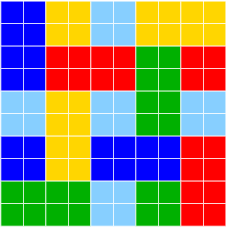} &\epsffile{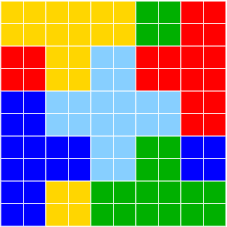}\\%30% p.4

\mbox{(a)} &\mbox{(b)}
\end{array}
\]
\caption{Second 5-colouring of 10-55-2 by thick striping (species $33_3$). a. Obverse. b. Reverse.}%fig:83ab}
\label{fig:83ab}
\end{figure}

The simplest redundancy configuration is the checkerboard of redundant cells when there are two colours. The simplest redundancy configurations for thin striping are the simplest twills of order equal to the number of colours with one dark cell per order, the colours being the same in both directions, and there is only one such twill per order. This was the subject of \cite{Thomas2014}. For 3 and 4 colours these twills are all there are. The second-simplest configuration, the {\em doubled twill}, is useful for thick striping. This was the subject of \cite{Thomas2013}. The two papers referred to discuss colouring with twilly redundancy. With both of these redundancy configurations, the order of the colours vertically and horizontally is the same (or opposite depending on how one counts). Beginning with 5 colours, there begins to be possible non-twilly redundancy as well. That is what this paper is about. The configurations of redundant cells that allow this are the $(5,3)$ satin of Figure~\ref{fig:66abc}a for 5 colours and the fabric 6-1-1 of Figure~\ref{fig:6-1-1} for 6 colours, each the smallest of infinite families of no practical interest because of involving more and more colours. An infinite family that the $(5,3)$ satin begins is the square satins of odd orders in species $36_1$. A contrast with twilly redundancy is that now the orders of the colours vertically and horizontally are not the same; this is illustrated in most of the coloured figures.

\begin{figure}%
\[
\begin{array}{cc}
\epsffile{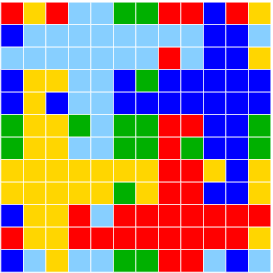} &\epsffile{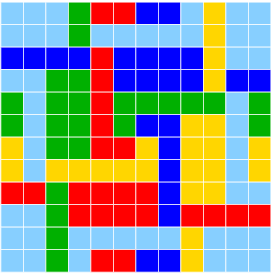}\\%30%Fig 14 p. 2 top left and back on rt; formerly p. 4 bott. left and back on rt. Now new.
\mbox{(a)} &\mbox{(b)}
\end{array}
\]
\caption{Five-colourings by thick striping of order-20 fabric (species $33_4$, Figure~\ref{fig:76ab}a).}%fig:87ab}
\label{fig:87ab}
\end{figure}                    

Recall that, for a choice of colours for strands to be a perfect colouring, all symmetries of the design permute the colours of the strands coherently; that is, if one red strand is taken by a design symmetry to a yellow strand, then all red strands are taken to yellow strands. This is more easily thought about in terms of redundant cells. Because both strands passing through a redundant cell are of its colour, the mapping of strands' colours is determined by the mapping of redundant cells. If one red redundant cell is taken to a yellow redundant cell, than all must be, as must be the strands through them. This means that to check that a colouring is perfect one need look only at redundant cells, which relieves one from having to check the other side of the coloured fabric since the redundant cells appear with the same colour on both sides. 

\begin{figure}%18not84
\[
\begin{array}{cc}
\epsffile{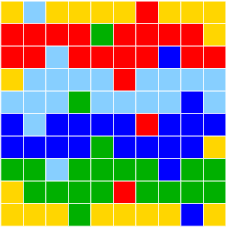} &\epsffile{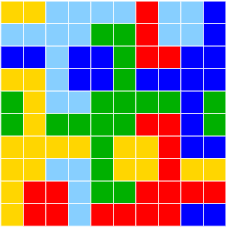}\\%30% p.4

\mbox{(a)} &\mbox{(b)}
\end{array}
\]
\caption{Distinct 5-colourings of 10-39-1 (species $35_3$, Figure~\ref{fig:81b72b}a) by thick striping.}%fig:84ab }
\label{fig:84ab}
\end{figure}

It is good to see what is actually involved in this specification. Figure~\ref{fig:75b} indicates both a full colouring above and the redundant cells coloured with other cells white below. The periodicity of the colouring, making the redundant cells of each colour a square lattice, means that if a red cell is placed on a yellow cell by an isometry, all the red cells are placed on yellow cells. The symmetries of the derived design (the $(5,3)$ satin) that take red cells to yellow cells take the four cells closest to the red ones, light blue, dark blue, yellow, and green, to those closest to yellow, light blue, green, dark blue, and red, in one of four positions depending on rotation. This is true for all of the symmetries $G_2$ of the derived fabric, and the necessary condition that $G_2$ of the fabric to be coloured is a subgroup of $G_2$ is sufficient to make the colour mapping of the redundant cells coherent. What does this show?
The necessary condition on the colouring of strands for perfect colouring, that $G_2$ of the design to be coloured be a subgroup of $G_2$ of the derived design, is also sufficient when the derived design is the square $(5,3)$ satin, the number of the colours equals the order of the derived design, and the colours are assigned to strands periodically. We stick to this situation in Sections~\ref{sect:col} to \ref{sect:larger}.

Note that, while the centre of rotation (modulo translation) in the discussion of Figure~\ref{fig:75b} pertains to rotations of the colouring whose centre lies in the centre of a cell, the same coherence obtains if the centre of rotation (modulo translation) lies not there but in the centre of the square with vertices at four neighbouring cell centres. Staying near the bottom left corner of the redundant-cell diagram, consider rotation about the centre \whbox of the small square with vertices in cells coloured red, yellow, dark blue, and green. That centre is at cell corners and permutes the 4 colours mentioned cyclically. But what happens to the fifth colour? Note that the 4 closest light-blue cells form a bigger square with the same centre; accordingly they are permuted, as are all light-blue cells, with no effect on their colour.

\begin{figure}%19not89
\[
\begin{array}{cc}
\epsffile{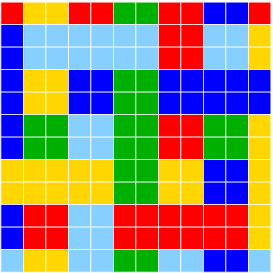} &\epsffile{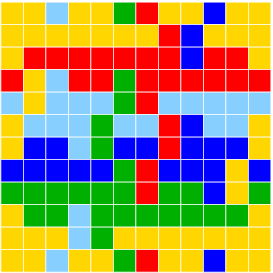}\\%30%Fig: 13b p. 2 bott left and p.6 upper left.
\mbox{(a)} &\mbox{(b)}
\end{array}
\]
\caption{Five-colourings by thick striping of the fabric 10-85-1 doubled (species $35_4$, Figure~\ref{fig:77ab}a). a. That of Figure~\ref{fig:71ab}b doubled. b. Not that of Figure~\ref{fig:71ab}b doubled.}%fig:89ab}
\label{fig:89ab}
\end{figure}

The above discussion in terms of red and yellow applies to any pair of colours in the situation discussed based on that square satin. Whether the discussion depends on the oddness of that order lies outside the scope of this paper. When the order is even, the symmetry groups involved are of the same type, 36, and are arranged the same way with the centre of the lattice unit at a centre of $p4$ symmetry in the centre of a cell. This is true of both subspecies $36_1$ (Figure~\ref{fig:68abc}a) for odd-order satins and $36_s$ (Figure~\ref{fig:71ab}a) for even-order satins. (Groups of this type do not need to be so located. Subspecies $36_2$, illustrated in Figure~\ref{fig:70ab}a, has all of its quarter-turn symmetries at cell corners.)

I refer to the result of colouring the strands of a fabric as a {\em pattern} so that we can refer to the resultant pattern of a colouring of a fabric. Designs are a special case. It is patterns that are shown then in all of the weaving diagrams of figures from \ref{fig:66abc} to \ref{fig:13aXX}. A feature that is of aesthetic importance in patterns but not formally defined is motifs. They are what one's eye sees repeated when one looks at the pattern. In Figure~\ref{fig:66abc}b and c one sees stripes. In Figure~\ref{fig:81a73b}b one can see blocks of 4 differently multicoloured and bicoloured rectangles of 8 cells. Each of the former can be seen to be surrounded by four of the latter, making up another motif, but these larger motifs overlap, which seems not to be an aesthetic advantage. Sometimes the noticeable motif, like the 7-cell crosses of Figure~\ref{fig:76ab} or the 9-cell crosses of Figure~\ref{fig:70ab}, do not cover the pattern but occupy only part of it. And sometimes the motif, like the 5-cell crosses of Figure~\ref{fig:69ab}b or 5-block crosses of Figure~\ref{fig:83ab}b, cover the whole pattern. The fewer and simpler the motifs and the more of the pattern they cover, the more attractive the pattern seems to me to be. Thick striping allows more scope for interesting motifs; see Figures~\ref{fig:82ab}a and b, \ref{fig:91}, \ref{fig:92}, and \ref{fig:94}.
Whether one regards as a motif the squares, the presence of whose corners give square satins their name, is a matter of taste.

\section{Main results}%6
\label{sect:results}

\noindent
This paper introduces the topic of patterns of periodically coloured isonemal fabrics with redundancy configurations different from those of twills and doubled twills discussed in \cite{Thomas2013,Thomas2014}. Some of what can be done can now be stated.

\begin{thm}%1
In each subspecies of species $33$ to $39$ of isonemal fabrics there are orders such that all fabrics of those orders can be perfectly coloured by thin striping with 5 colours and redundant cells arranged as the satin of order 5, the area of the level-1 lattice unit on which the fabric is based.
\label{thm:sat1}
\end{thm}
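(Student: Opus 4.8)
The plan is to reduce the statement to the sufficiency criterion already isolated above and then to verify the containment it requires, subspecies by subspecies. Recall that when the derived design is the $(5,3)$ satin, the colours number five, and the colours are assigned periodically, the necessary condition that $G_2$ of the fabric be a subgroup of $G_2$ of the derived design is also sufficient. It therefore suffices to exhibit, in each subspecies, orders for which every fabric has its $G_2$ contained in the $G_2$ of a suitably placed satin. Since the $(5,3)$ satin has lattice unit of area $5$, the requirement that this equal the area of the fabric's level-1 lattice unit forces $M_1^2+N_1^2=5$, hence $M_1=2$ and $N_1=1$. This pins down, for each subspecies at its level $\ell$, the orders named in the theorem as exactly those carried by this minimal base; that such fabrics exist is part of the classification and is witnessed by Roth's examples in the figures.

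I would next describe $G_2$ uniformly. Every one of species 33 to 39 is $p4$ with no axes, so $G_2$, the image of $G_1$ under projecting out $\tau$, is in each case a $p4$ isometry group whose translation lattice is the $G_1$ lattice and whose two orbits of quarter-turn centres sit at the $G_1$ quarter-turn centres. Because projecting out $\tau$ discards the side-preserving versus side-reversing distinction, the pure side-reversing subspecies (35 and 36) and the mixed subspecies (37, 38, 39) alike deliver a genuine $p4$ group at the level of their $G_1$ unit; in particular, for species 37 and 38 the higher level of $H_1$ is irrelevant, only the $G_1$ level mattering.

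The core is then the single containment of $G_2$ of the fabric in $G_2$ of the satin, the latter being the full level-1 $p4$ group on the base, which carries quarter-turn centres at both the cell-corner orbit and the cell-centre orbit. I would establish this through the escribing (``origami'') passage between consecutive levels: with $M_2=M_1+N_1$ and $N_2=M_1-N_1$, then $M_3=2M_1$ and $N_3=2N_1$, and so on, one checks that the level-$\ell$ translation lattice lies inside the level-1 lattice and that both orbits of level-$\ell$ quarter-turn centres fall among the two level-1 orbits. A direct computation at level 2 places every level-2 centre --- whether seated at cell corners as in $36_2$ or at cell centres as in $36_s$ --- among the satin centres, and the subsequent doublings carry this upward to levels 3 and 4. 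At level 1 itself (subspecies $36_1$ and 39) the two groups share the very same quarter-turn isometry centres, so there $G_2$ of the fabric equals $G_2$ of the satin.

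The step I expect to be the main obstacle is this orbit-position matching, and in particular seeing that the passage from level 1 to level 2 --- which turns by a right angle and scales by $\sqrt2$ rather than merely dilating --- nevertheless embeds the level-2 lattice and both its quarter-turn orbits inside the level-1 ones. One must confirm not merely that the abstract lattices nest but that the positions of the quarter-turn centres relative to the cells land on satin centres; the delicate cases are $36_2$ and $36_s$, which share one abstract group but seat its centres at cell corners versus cell centres, and the mixed-turn subspecies 37, 38, 39, where two distinct orbits must be tracked at once. Once the containment is verified for a single fabric in each subspecies it holds for all fabrics of that order there, the classification having fixed $G_2$; the sufficiency criterion then yields the perfect five-colouring and completes the argument.
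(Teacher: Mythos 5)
Your proposal is sound and would prove the theorem, but it takes a genuinely different route from the paper at the verification stage. Both arguments share the same reduction: perfection need only be checked on redundant cells, and for $(5,3)$-satin redundancy with five periodically assigned colours the necessary condition --- that $G_2$ of the fabric be a subgroup of $G_2$ of the satin --- is also sufficient (the paper establishes this in its colouring section via the redundant-cell diagrams). From there, however, the paper's proof is by exhibition: it simply displays an explicitly coloured fabric for each of the eleven subspecies in the figures listed immediately after the theorem statement, and leaves the passage from one example to ``all fabrics of those orders'' implicit (that the argument depends only on lattice units and not on particular fabrics is spelled out only later, inside the proof of the thick-striping theorem). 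You instead verify the subgroup containment uniformly and abstractly: pinning down $M_1=2$, $N_1=1$; observing that projection kills the side-preserving/side-reversing distinction, so that every subspecies yields a $p4$ group seated at its $G_1$ lattice (your remark that the $H_1$ level is irrelevant for species 37 and 38 is exactly right); and then using the escribing recursion $M_2=M_1+N_1$, $N_2=M_1-N_1$, $M_3=2M_1$, $N_3=2N_1$ to push the translation lattice and both orbits of quarter-turn centres of each level into those of the satin. This is essentially the computation the paper performs only for its Theorem 3, on level-1 units larger than minimal; importing it here buys a figure-free, self-contained proof in which the ``all fabrics'' clause is automatic, at the cost of the positional checks you rightly flag as the crux: the level-1-to-level-2 step rotates by $\pi/4$ and scales by $\sqrt2$, and only one of the two possible orientations of the level-2 lattice is a sublattice of the satin's lattice (the escribing construction selects the correct one), while the cell-corner versus cell-centre seatings ($36_2$ versus $36_s$, and the two orbits in the mixed species) must be matched to the satin's corresponding orbits of quarter-turn centres. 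One simplification you could add: a $p4$ group is generated by its quarter turns, so containment of the quarter-turn centres alone already gives containment of the whole group, and translations and half turns need no separate check.
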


The existence of such orders is illustrated by the patterns of thinly striped fabrics in Figures~\ref{fig:81a73b}b and \ref{fig:76ab}b for $33_3$ and $33_4$, Figure~\ref{fig:69ab}b for 34, Figures~\ref{fig:81b72b}b and \ref{fig:77ab}b for $35_3$ and $35_4$, Figures~\ref{fig:68abc}b and c for $36_1$, Figure~\ref{fig:70ab}b for $36_2$, Figure~\ref{fig:71ab}b for $36_s$,
Figure~\ref{fig:75b} for 37, Figure~\ref{fig:85a74b}b for 38, and Figure~\ref{fig:67abc}b and c for 39. The method used here for 5 colours appears to be general and to apply at least to larger odd orders, $M_1^2 + N_1^2 = 13$, 17, \dots , and perhaps to even orders, but none of that will be explored in this too long paper.

\begin{figure}%23not90
\[
\epsffile{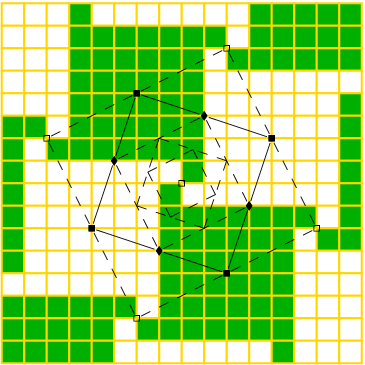}
\]
\caption{Order-20 example of species 37 with lattice unit of $G_1$ outlined and larger $H_1$ and lower-level lattice units dashed.}%fig:90}
\label{fig:90}
\end{figure}

Thick striping is also possible. The redundancy configuration analogous to the $(5,3)$ satin is that satin doubled (Figure~\ref{fig:80}). All square satins can be doubled \cite{Thomas2010b}. Table 2 %~\ref{tab:corr} 
indicates what level of lattice unit goes with which species.
\begin{table}
\begin{tabbing}
\hskip 80 pt \= Level 1\hskip 15 pt \= Level 2\hskip 18 pt\= Level 3\hskip 15 pt\= Level 4\\
Symmetries\\
All \whbox        	\>$\emptyset$ \>34           \>$33_3$   \>$33_4$\\
All \blbox        	\>$36_1$      \>$36_2, 36_s$ \>$35_3$   \>$35_4$\\
Both \whbox and \blbox  \>39          \>$\emptyset$  \>38       \>37\\
\end{tabbing}
\vskip - 18 pt
\noindent Table 2. Correspondence among species, symmetry operations, and levels.
\label{tab:corr}
\end{table}
The group $G_2$ of the doubled (5,3) satin is at level 3, a proper subgroup of groups at levels 1 and 2, and so the group $G_2$ of no fabric at level 1 or 2 can be a subgroup of the group $G_2$ at level 3 based on the same level-1 lattice unit. This means that the lattice units of species 34, 36, and 39, which the table from \cite{Thomas2010b} shows are at those levels, are not of the right size to fit the redundancy configuration of the doubled (5,3) satin. Accordingly they cannot be used. This will justify their exclusion from the theorem indicating which species allow perfect colouring by thick striping.

\begin{thm}%2
For each subspecies of species $33$, $35$, $37$, and $38$ of isonemal fabrics based on a level-1 lattice unit of area 5 there are orders such that all fabrics of those orders can be perfectly coloured by thick striping in two different ways with 5 colours and redundant blocks arranged as the satin of order 5 doubled.
\label{thm:2sat1}
\end{thm}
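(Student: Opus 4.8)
The plan is to rerun the proof of Theorem~\ref{thm:sat1} with the doubled $(5,3)$ satin 10-3-1 of Figure~\ref{fig:80} as the derived prefabric. First I would record the controlling criterion. By the necessary condition of \cite{Thomas2013}, a thickly striped perfect colouring with this redundancy requires $G_2$ of the fabric to be a subgroup of $G_2$ of 10-3-1. I would then check, exactly as in the satin case, that this condition is also sufficient for any periodic assignment of the five colours to the thick stripes: the redundant blocks of each colour form a square sublattice, and the permutation of colours induced by an element of $G_2$ is determined by, and automatically coherent on, those sublattices.

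Next I would identify the eligible species. Since $G_2$ of 10-3-1 sits at level $3$, the criterion forces the fabric's $G_2$ to lie at level $3$ or level $4$, because a level-$1$ or level-$2$ group built on the same base is a proper supergroup of the level-$3$ group and so cannot be a subgroup of it --- the exclusion already noted before the statement. Reading the level-$3$ and level-$4$ columns of Table~\ref{tab:corr}, the surviving $p4$ species are exactly $33_3,35_3,38$ and $33_4,35_4,37$, that is all subspecies of species $33$, $35$, $37$, $38$, while $34$, $36$ and $39$ at levels $1$ and $2$ drop out. Fixing the base level-$1$ unit to have area $5$ gives $M_1=2$ and $N_1=1$, whence $M_3=4,N_3=2$ and $M_4=6,N_4=2$; these determine the orders (such as $10$ and $20$, realized in Figures~\ref{fig:82ab}--\ref{fig:89ab}) for which every fabric of the subspecies has $G_2$ equal to, or a subgroup of, that of the redundancy configuration.

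It remains to produce the two distinct colourings. For species $37$ and $38$ I would read them off the arbitrariness flagged for Figures~\ref{fig:75a} and~\ref{fig:85a74b}a: the side-reversing centres \blbox and the side-preserving centres \whbox may be placed at the lattice-unit centres and corners in either of two ways, and the two placements embed $G_2$ of the fabric into $G_2$ of 10-3-1 inequivalently, hence assign the five colours to the doubled stripes in two ways. For species $33$ and $35$, where only one kind of quarter-turn centre occurs, I would instead use the fact that doubling splits the satin's quarter-turn centres into two translation-inequivalent orbits --- the centres of the dark redundant blocks and the centres between them; aligning the fabric's quarter-turn centre with one orbit or the other gives the two colourings, one of them the doubling of the thin colouring of the underlying fabric and one not, as contrasted in Figure~\ref{fig:89ab}.

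The main obstacle is to show that in each subspecies these two colourings are genuinely different --- not related by a relabelling of the colours nor by a symmetry of the pattern --- and that exactly two arise. I would settle this by computing, for each subspecies, the stabiliser in $G_2$ of a redundant block together with the action of the quarter turn on the cyclic order of the five colours, and checking that the set of admissible periodic colour assignments, modulo colour-preserving symmetries, has precisely two elements. Perfectness of each then follows from the sufficiency established in the first paragraph, completing the proof.
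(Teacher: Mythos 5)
Your overall skeleton --- the $G_2$-subgroup criterion together with its sufficiency for satin-type redundancy, and the exclusion of species 34, 36 and 39 because their groups sit at levels 1 and 2 while the doubled satin's group sits at level 3 --- matches the paper, and your orbit-alignment argument for species 33 and 35 (the fabric's quarter-turn centres land either on the dark-block centres of 10-3-1 or on the centres between them) is in substance the paper's own construction. But your treatment of species 37 has a genuine gap. The \blbox\ / \whbox\ arbitrariness flagged for Figure~\ref{fig:75a} cannot produce two colourings there: the two `placements' are merely two ways of drawing the same lattice for the same fabric, and since $\tau$ is discarded in passing from $G_1$ to $G_2$, both describe exactly the same set of quarter-turn centres. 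Moreover, at level 4 neither class of quarter-turn centres separately has the right density to carry the redundant blocks: the \blbox\ corners alone and the \whbox\ centre alone each form a square lattice with one point per area 40, whereas the blocks of the doubled $(5,3)$ satin come one per area 20. Only the union of corners and centre is spaced like the doubled satin, so both of your placements yield the identical set of redundant blocks, not two. This is exactly the breakdown the paper itself records (`these alternative lattice units are no help because the centres and corners have already been used'); the second colouring at level 4 must come from elsewhere, namely from the blocks centred at the mid-sides of the level-4 unit, which are the corners of the inscribed level-3 lattice unit and hence also spaced as a doubled $(5,3)$ satin --- and those are half-turn centres (\diab\ or \diaa), not quarter-turn centres at all.

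The irony is that your species-33/35 argument, which is level-blind, would have closed this hole: applied to species 37 it gives precisely the two admissible alignments (the fabric's quarter-turn lattice onto the dark blocks, or onto the points between them, the latter placing the redundant blocks at the fabric's mid-side half-turn centres). The paper organizes the proof by level (3 versus 4), not by whether the species mixes the two kinds of quarter turn, and that is the correct dichotomy: the corner/centre swap works at level 3 (your species 38 case, where it coincides with the orbit argument) and fails at level 4 (species 37, and equally $33_4$ and $35_4$). Separately, your closing plan to verify that exactly two colourings arise and that the patterns are not related by recolouring or by a symmetry overshoots both the statement and the paper's proof: the paper claims only that the two sets of redundant blocks differ, and explicitly concedes that the resulting patterns might conceivably look alike; that part of your proposal is in any case a promissory note rather than an argument.
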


\begin{figure}%24not91
\[
\epsffile{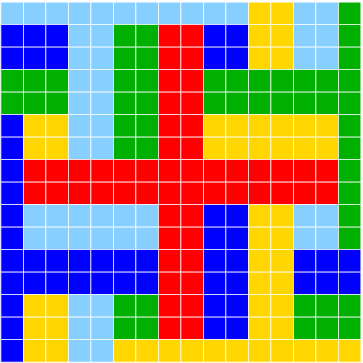}%30% p4 Fig 16 p. 3 left and right.
\]
\caption{Five-colouring of fabric of Figure~\ref{fig:90} (species 37) by thick striping.}%fig:91}
\label{fig:91}
\end{figure}
\begin{proof}
The theorem is shown to be true partly by examples of patterns, Figures~\ref{fig:82ab}a and b and Figures~\ref{fig:83ab}a and b for $33_3$, Figures~\ref{fig:87ab}a and b for $33_4$, Figures~\ref{fig:84ab}a and b for $35_3$, Figures~\ref{fig:89ab}a and b for $35_4$, Figures~\ref{fig:91} and \ref{fig:92} for 37, and Figures~\ref{fig:86ab}a and b for $38$. The other part is to show why it is always possible to stripe fabrics of these species thickly in two ways.

The reason for this is slightly different for levels 3 and 4. In the case of colouring at level 3 (Figures~\ref{fig:82ab} and \ref{fig:83ab} for $33_3$, Figure~\ref{fig:84ab} for $35_3$, and Figure~\ref{fig:86ab} for $38$), one needs to refer to the designs, Figures~\ref{fig:81a73b}a, \ref{fig:81b72b}a, and \ref{fig:85a74b}a respectively. In each of those designs, a level-3 $G_1$ lattice unit of the same size and orientation is delineated by solid lines. (In the last case the $H_1$ lattice unit, twice the size, is also marked.) In each case -- and in the case of any level-3 lattice unit based on the same level-1 unit -- the four-cell blocks centred on the corners of the lattice unit are spaced like a doubled $(5,3)$ satin. One way to colour fabrics of those species {\em based on the same level-1 unit} is to make those corner blocks redundant. They will then be moved around by symmetries in the required way. But at level 3 there are just as good alternative lattice units taking their corners to be the centres of the lattice units used before. These are not marked, but they are always there. So a second set of redundant blocks are at the centres of the lattice units already discussed. They are always different in the sense intended -- the sense in which quadratic equations always have 2 roots. It is logically possible, so far as I know, for the two colourings to look the same like the unrelated Figures~\ref{fig:71ab}b and \ref{fig:67abc}c.

\begin{figure}%22not92
\[
\epsffile{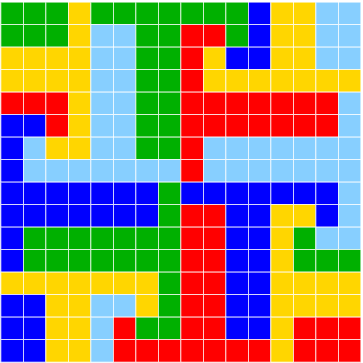}%30% p4 Fig 16 p. 3 left and right.
\]
\caption{Five-colouring of fabric of Figure~\ref{fig:90} (species 37) by thick striping distinct from that of Figure~\ref{fig:91}.}%fig:92}
\label{fig:92}
\end{figure}

At level 4 (Figures~\ref{fig:87ab}a and b for $33_4$, Figures~\ref{fig:89ab}a and b for $35_4$, and Figures~\ref{fig:91} and \ref{fig:92} for 37), one needs to refer to the designs, Figures~\ref{fig:76ab}, \ref{fig:77ab}, and \ref{fig:90} respectively. In each of those designs, a level-4 $G_1$ lattice unit of the same size and orientation is delineated by solid lines. In all three cases -- and in the case of any level-4 lattice unit based on the same level-1 unit -- the blocks centred on the corners and centre of the lattice unit are spaced like a doubled $(5,3)$ satin. One way to colour fabrics of those species {\em based on the same level-1 unit} is to make those corner and centre blocks redundant. As at level 3, there are just as good alternative lattice units taking their corners to be the centres of the lattice units and centres to be the corners of the lattice units used. But these alternative lattice units are no help because the centres and corners have already been used. The parallel between levels 3 and 4 breaks down, and something different needs to be done. Fortunately, blocks centred on the mid-sides of the lattice units (both of the alternatives) are also spaced like a doubled $(5,3)$ satin -- because they are the corners of the inscribed level-3 lattice unit -- and are permuted the same way as the others. In each of the pairs of colourings illustrated above, the first has been done with the redundant blocks at corners and centre and the second at mid-sides of the level-4 lattice units like the unrelated Figures~\ref{fig:71ab} and \ref{fig:67abc}.

Because the previous two paragraphs pertain to lattice units at levels 3 and 4 and not to particular fabrics or even particular species, all of these subspecies can be coloured in two ways, and the theorem is proved.
\end{proof}

\begin{figure}%20not86
\[
\begin{array}{cc}
\epsffile{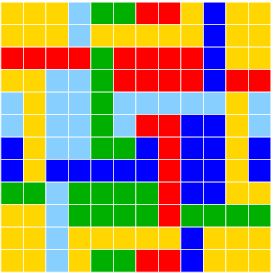} &\epsffile{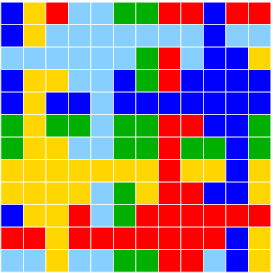}\\%30% from 13a at p. 4 top and p. 6 top right

\mbox{(a)} &\mbox{(b)}
\end{array}
\]
\caption{Distinct 5-colourings of 20-19437 by thick striping (species 38, Figure~\ref{fig:85a74b}a).}%fig:86ab}
\label{fig:86ab}
\end{figure}

\section{Level-1 lattice units larger than minimal}%7
\label{sect:larger}
\noindent Figure~\ref{fig:68abc}a discreetly introduces the fact that some isonemal fabrics with level-1 $p4$ lattice units having area larger than $5=M_1^2+N_1^2$ are equally suitable for 5-colouring with (5,3)-satin redundancy. It has $M_1=3, N_1=4$. The two requirements on $M_1$ and $N_1$ are that $G_2$ of a prefabric based on such a level-1 lattice unit be a subgroup of $G_2$ of the (5,3) satin (for 5-colorability) and that the prefabric be isonemal. The first requirement is easily visualized in the diagram of Figure~\ref{fig:79}. It has $x$- and $y$-axes 1 unit up and 3 units to the right of two diagram boundaries. If we put a point of the infinite (5,3)-satin lattice in its usual orientation at the origin, then its points are indicated by the boxes, some of which are filled.
\begin{figure}%14not79
\[
\epsffile{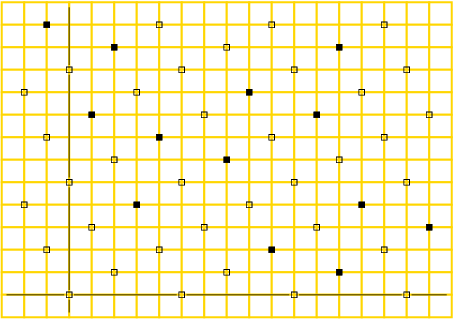}
\]
\caption{A quarter-plane of corners of $G_1$ lattice units of Figure \ref{fig:66abc}a of the $(5, 3)$ satin marked with boxes. $M_1 =2$. $N_1 = 1$. Corners eligible as corners of larger lattice units marked \blboxx , ineligible marked \whboxx .}%fig:79}
\label{fig:79}
\end{figure}

For 5-colorability with this redundancy, the level-1 lattice of a prefabric must fall within this lattice, as that of the fabric of Figure~\ref{fig:68abc}a does. The filled box closest to the origin, $(3,4)$, is the other end of an edge of its lattice unit starting at the origin. Going around the lattice unit, the next corner is at $(-1,7)$ (on the diagram) and the last at $(-4,3)$ (just off the diagram).

The isonemality constraint is much more complex, but it is a main topic of \cite{Thomas2010b}. Various points $(x, y)$ with integer co-ordinates are ineligible:

\noindent 1. Points with $x=5m$ or $y=5n$ (otherwise isonemality fails).

\noindent 2. Points with $(x, y) = m(2, 1)$ or $n(-1, 2)$ (otherwise isonemality fails or the level is greater than 1).

\noindent 3. Points with $x$ and $y$ not relatively prime (otherwise level is greater than 1).

\noindent 4. Points with $(x, y) = m(1, 3)$ (otherwise level is $2m$).

\noindent What one is looking for is $(x, y)$, representing $(M'_1,N'_1)$ such that $x^2 + y^2$ is the sum of an odd square and an even square. (For isonemality $M'_1$ and $N'_1$ must be of opposite parity and relatively prime.
In the figure, the closest eligible points to the origin in the first quadrant give lattice-unit areas $3^2 + 4^2=25$, $1^2 + 8^2=65=4^2 + 7^2$, $2^2 + 11^2=125$, $7^2 + 6^2=85=9^2 + 2^2$, $8^2 + 9^2=145=12^2+1^2$, $11^2 + 8^2=185=13^2 + 4^2$, and $12^2 + 11^2 = 265 = 16^2 + 3^2$.
Note the area duplications that are geometrically distinct in orientation.
Some ineligible pairs have  $7^2 + 1^2=50=2(25)$, $7^2+11^2=170=2(85)$, and $9^2+7^2=130=2(65)$, which are level-2 corners.
Because the configuration has quarter-turn symmetry about the origin, the other corners of squares on the displayed bases (or any such bases) are in the configuration.
Along each vertical half-line $x\neq 5m$ in the first quadrant beginning with $x=1$, $y > 0$, alternate boxed points have the correct (opposite) parity, differing by 10 in $y$-co-ordinates.
For example, $(4, 12)$ is no use, but $(4, 7)$ and $(4, 17)$ are fine.
The process of locating suitable corners is much the same as the determination of $M_1$ and $N_1$ in \cite{Thomas2010b}, where only isonemality and level 1 are issues, because what is wanted is the overlap between the result of that process and the $(5, 3)$-satin lattice.
\begin{figure}%
\[
\epsffile{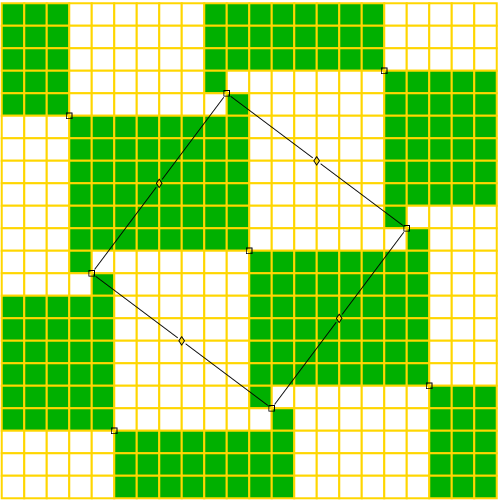}%30% p.6
\]
\caption{Order-50 example of species $33_3$ with one lattice unit of $G_1$ outlined and other centres of quarter turns marked.}%fig:93}
\label{fig:93}
\end{figure}

\begin{thm}%3
The groups $G'_2$ at levels 1 to 4 based on $M'_1=2m-n$, $N'_1=m+2n$, where $m$ and $n$ are integers of opposite parity, are subgroups of the group $G_2$ of the satin with $M_1=2$, $N_1=1$.
\label{thm:larger}
\end{thm}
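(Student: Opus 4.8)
The plan is to read the statement as a containment of two planar crystallographic groups of type $p4$ and to reduce it to two easily-checked facts. Write $u=(2,1)$ and $v=(-1,2)$ for the two perpendicular edges of the satin's level-1 lattice unit, so that the translation lattice of $G_2$ is $\Lambda=\mathbb{Z}u\oplus\mathbb{Z}v$ and, as noted in Section~\ref{sect:col}, the quarter-turn (4-fold) centres of $G_2$ are exactly the cell corners $\Lambda$ together with the cell centres $w+\Lambda$, where $w=\tfrac12(u+v)=(\tfrac12,\tfrac32)$. Each group $G'_2$ at level $k$ is itself of type $p4$ and is therefore generated by its translation lattice $L_k$ together with a single quarter turn about one corner of a level-$k$ lattice unit. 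Hence it suffices to prove, for $k=1,2,3,4$, that (a) $L_k\subseteq\Lambda$, and (b) one chosen corner is a quarter-turn centre of the satin, i.e. lies in $\Lambda\cup(w+\Lambda)$; the remaining generators of $G'_2$—in particular the quarter turns about the unit centres—are then automatically recovered as products of these with lattice translations and need not be checked separately.

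For (a) the key observation is the identity already implicit in the parametrisation: $(M'_1,N'_1)=(2m-n,\,m+2n)=m\,u+n\,v$, with perpendicular $(-N'_1,M'_1)=-n\,u+m\,v$, so both edges of the level-1 unit lie in $\Lambda$ and $L_1\subseteq\Lambda$ for every choice of $m,n$. The higher levels are built from this one by the doubling and $45^\circ$ escribing described in Section~\ref{sect:sym}: the level-2 unit is the origami square on the two diagonals $d_{\pm}=(M'_1,N'_1)\pm(-N'_1,M'_1)$ of the level-1 unit, the level-3 unit is the level-1 unit scaled by $2$, and the level-4 unit is the level-2 unit scaled by $2$. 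Since $d_{\pm}$ are integer combinations of $u$ and $v$, and doubling preserves $\Lambda$, the lattices $L_2=\langle d_+,d_-\rangle$, $L_3=2L_1$ and $L_4=2L_2$ are all sublattices of $\Lambda$. This disposes of (a) at every level at once.

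For (b) I would treat the two parities of level together. At levels $1$ and $3$ the chosen corner lies in $L_1\subseteq\Lambda$ or in $2L_1\subseteq\Lambda$, hence is a cell-corner 4-fold centre of the satin, and nothing more is needed. At levels $2$ and $4$ the escribed square has its corners at cell-centre points, and here the hypothesis that $m,n$ have opposite parity finally does real work: the common centre of the nested level-1 and level-2 units is $c'_1=\tfrac12\big((m-n)u+(m+n)v\big)$, and opposite parity makes both $m-n$ and $m+n$ odd, so $c'_1=w+\lambda$ for some $\lambda\in\Lambda$, i.e. $c'_1\in w+\Lambda$. The level-2 (hence also the doubled level-4) corners differ from $c'_1$ by half a diagonal, which lies in $\Lambda$, so they too lie in the cell-centre coset $w+\Lambda$ and are genuine quarter-turn centres of the satin. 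Combining (a) and (b), every generator of $G'_2$ lies in $G_2$, so $G'_2\subseteq G_2$ at each level. I expect the main obstacle to be exactly this last point: correctly locating the escribed level-2 and level-4 lattice units—whose corners are \emph{not} the integer points $(M_2,N_2)$ but the diagonal/cell-centre points—and verifying that their quarter-turn centres fall on the satin's cell-centre 4-fold centres and nowhere else. This is the only place the opposite-parity constraint is indispensable, and getting the geometry of the origami nesting right is where the care is required.
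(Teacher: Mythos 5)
Your proof is correct, and it reaches the paper's conclusion by a genuinely different organization. The paper works almost entirely at level 1, verifying element class by element class: that the level-1 unit's corners fall on corners of the basic lattice, that its mid-sides fall on mid-sides of basic units (this consumes much of the paper's proof, with the two forms A and B and four parity cases), and that its centre falls on a basic centre (the same opposite-parity computation you use); it then disposes of levels 2 to 4 in a single sentence, noting that the groups at higher levels based on the same level-1 unit are subgroups of the level-1 group $G'_2$, hence of $G_2$ by transitivity. Your reduction to generators --- a $p4$ group is generated by its translation lattice together with one quarter turn, so only (a) lattice containment and (b) one quarter-turn centre need checking --- is a real simplification the paper does not exploit: it renders the paper's entire mid-side verification unnecessary, since the half turns at mid-sides are automatically compositions of elements already known to lie in $G_2$. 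Conversely, the paper's nesting argument for levels 2 to 4 is more economical than your level-by-level treatment. The essential arithmetic is shared: the identity $(M'_1,N'_1)=mu+nv$ and the observation that opposite parity of $m$ and $n$ places the relevant half-integer points in the cell-centre coset $w+\Lambda$.

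One inconsistency is worth repairing, though it does not invalidate the argument. You describe the level-3 unit as ``the level-1 unit scaled by $2$'' and claim its corner lies in $2L_1\subseteq\Lambda$; but under the paper's origami convention the nested units share a common centre, so the scaling is about $c'_1$, not about the origin corner, and the level-3 corners then lie in the cell-centre coset $w+\Lambda$ rather than in $2L_1$. (Your level-4 computation, by contrast, tacitly scales about the common centre.) Either placement lands on quarter-turn centres of the satin's $G_2$ --- indeed the placement with corners in $2L_1$ is the one actual level-3 and level-4 fabrics require, since their quarter-turn centres must sit at cell corners --- so your conclusion stands; but part (b) for level 3 should either fix the placement explicitly or observe that both candidate positions lie in $\Lambda\cup(w+\Lambda)$.
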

\begin{proof}
What needs to be proved for level 1 is that, if the first side of a lattice unit is from the first corner (0,0) to $(M'_1,N'_1) = (2m-n,m+2n)$, then $(M'_1,N'_1)$ and the other two corners around conterclockwise, $(m-3n,3m+n)$ and $(-m-2n,2m-n)$, fall on corners, that the mid-sides fall at mid-sides, and that the centre falls at a centre of a $G_2$ lattice unit.
The second corner is obvious. It is necessary that $(m-3n,3m+n)$ and $(-m-2n,2m-n)$ be of the form $p(2,1) + q(-1,2)$. This is the case for $p=m-n$ and $q=m+n$ for the third corner and $p=-n$ and $q=m$ for the fourth.

The mid-points of the first and last sides of the potential lattice unit are $\textstyle{\left(\frac{2m-n}{2}, \frac{m+2n}{2}\right)}$ and $\textstyle{\left(-\frac{m+2n}{2}, \frac{2m-n}{2}\right)}$, with $m$ and $n$ of opposite parity.
Mid-points of the basic lattice units' edges are of the form A: $\textstyle{(1, \frac{1}{2}) + p(2, 1) + q(-1, 2)}$ and B: $\textstyle{(-\frac{1}{2}, 1) + p(2, 1) + q(-1, 2)}$ for $p$ and $q$ integers.
The first mid-point is of form A when $m$ is odd and $n$ is even and $\textstyle{p=\frac{m-1}{2}}$ and $\textstyle{q=\frac{n}{2}}$; it is of form B when $m$ is even and $n$ is odd and $\textstyle{p=\frac{m}{2}}$ and $\textstyle{q=\frac{n-1}{2}}$.
The last mid-point is of form A when $m$ is even and $n$ is odd and $\textstyle{p=-\frac{n+1}{2}}$ and $\textstyle{q=\frac{m}{2}}$; it is of form B when $m$ is odd and $n$ is even and $\textstyle{p=-\frac{n}{2}}$ and $\textstyle{q=\frac{m-1}{2}}$.
So those two mid-points fall at mid-sides of the basic $G_2$ lattice, but the others are spaced as the corners are spaced and so all fall there.
\begin{figure}%27not94
\[
\epsffile{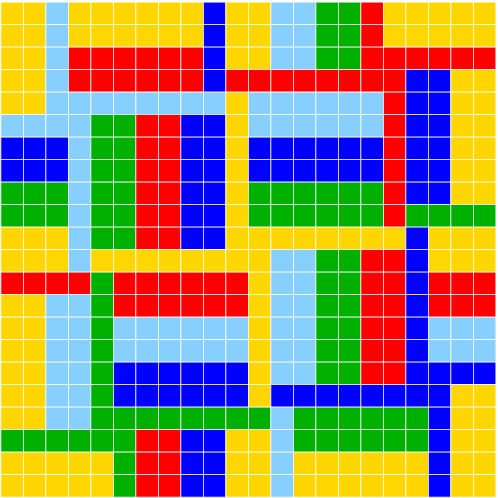}%30%p.7
\]
\caption{Five-colouring of fabric of Figure~\ref{fig:93} by thick striping.}%fig:94}
\label{fig:94}
\end{figure}

The centre is the mid-point $\textstyle{\left(\frac{m}{2}-\frac{3n}{2},\frac{3m}{2}+\frac{n}{2}\right)}$ of the diagonal vector to the third corner $(m-3n,3m+n)$.
As the centre of one basic lattice unit is $\textstyle{\left(\frac{1}{2},\frac{3}{2}\right)}$, the other centres are of the form $\textstyle{\left(\frac{1}{2},\frac{3}{2}\right)} + p(2, 1) +q(-1, 2)= (2p-q+\frac{1}{2},p+2q+\frac{3}{2})$ with $p$ and $q$ integers.
The condition required for the centre of the $G'_2$ lattice unit to fall on the centre of a basic $G_2$ lattice unit is that 
$$\left(\frac{m}{2}-\frac{3n}{2},\frac{3m}{2}+\frac{n}{2}\right)=\left(2p-q+\frac{1}{2},p+2q+\frac{3}{2}\right)$$
for $p$ and $q$ integers.
But this is equivalent to $\textstyle{p=\frac{m-n+1}{2}}$, $\textstyle{q=\frac{m-n-1}{2}}$, which are integers if and only if $m$ and $n$ are of opposite parity.
So the centres of the supposed subgroups' lattice units do fall on centres of the basic lattice units.
If $G'_2$ at level 1 is a subgroup of $G_2$, then groups at levels 2, 3, and 4 based on it are also subgroups of $G_2$, being subgroups of $G'_2$. The theorem is proved.
\end{proof}

\begin{figure}%27not95
\[
\epsffile{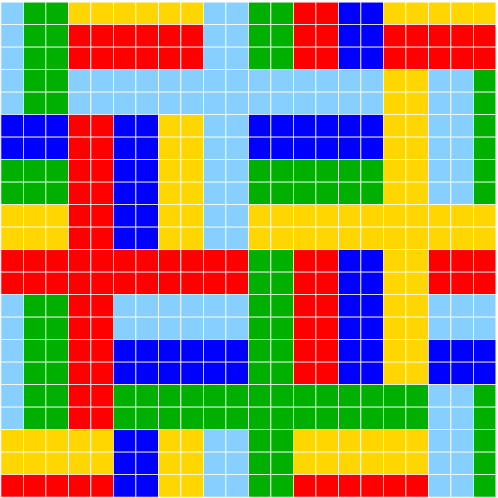}%30%p.7
\]
\caption{Five-colouring of fabric of Figure~\ref{fig:93} by thick striping, distinct from that of Figure~\ref{fig:94}.}%fig:95}
\label{fig:95}
\end{figure}

It is obvious that the lattice units with corners displayed in Figure~\ref{fig:79} are the first few of an infinite set with corners in the original lattice.
It almost looks as though there are too many such subgroups, as though practically any group will be a subgroup of $G_2$.
Not so.
The area of the lattice units here is $(2m-n)^2 + (m+2n)^2 =5m^2 + 5n^2$, always divisible by 5, the area of the basic lattice unit (recall the numerical examples before the theorem), a condition that is by no means universal as the partial table of allowable level-1 edge-lengths in \cite{Thomas2010b} shows.
The periods and so orders of all of these fabrics are multiples of 5.
The next integers one might use instead of are 13 and 17.
So 5 is the only number of colours to be used this way for thin striping of genuine interest.
The level-1 orders $\leq 100$ are only 5, 10, 20, 25, 50, $65=5(13)$, $85=5(17)$, and 100.
Because the isonemal fabrics thickly stripable with 5 colours are at levels 3 and 4, when they are based on the minimal level-1 lattice unit of area 5 they have orders 10 ($33_3$ and $35_3$) and 20 ($33_4$, $35_4$, 37, and 38).
When one looks farther afield for fabrics as for thin striping, one finds---besides the further ordinary level-1 orders 13, 17, and so on---that the same level-1 lattice units of area 25, 65, and 85 are available as for thin striping, but they can be used only at levels 3 and 4 where fabrics have orders 50, 100, 130, 170, 260, and 340 of steeply deminishing practical interest.
We conclude with an example of an order-50 fabric of subspecies $33_3$ in Figure~\ref{fig:93}, 5-coloured in Figures~\ref{fig:94} and \ref{fig:95}.
The second is again a pattern that looks doubled but cannot be, like Figure~\ref{fig:92}.

\section{Redundancy not square satin}%8
\label{sect:nonsatin}
\noindent Besides the other satins mentioned already, there are other isonemal non-twills with one dark cell per order length --- though only one with order less than 8. Not with order 3; the only isonemal fabric of order three is the twill \cite{GS1985}. Not with order 4; the highly exceptional fabric 4-1-1 cannot be used (\cite{Thomas2014}, proved in the proof of Theorem 3.3), and the only other such fabrics of order 4 are a twill and a doubled twill \cite{GS1985}. There is only one non-twill of order 5 \cite{GS1985}, the satin already discussed. Order 7 has only twills \cite{GS1985}. There is one isonemal fabric of order 6 with one dark cell per 6, 6-1-1 (called in \cite{GS1985} a {\em twillin}); it supplies the positive answer that is small enough to be visually interesting (Figure~\ref{fig:6-1-1}).
\begin{figure}%29 from perp, formerly 11a.eps
\[
\epsffile{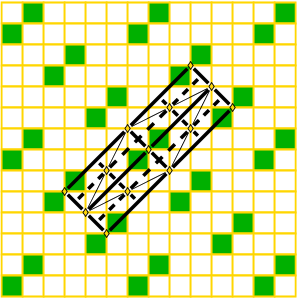}%marginal note 28%
\]
\caption{Example of twillin redundancy configuration, design 6-1-1 of species $27_o$.}%fig:6-1-1}
\label{fig:6-1-1}
\end{figure}

In order to appreciate how it can be used as a redundancy configuration, it is necessary to know more about isonemal fabrics that have only half-turn symmetries rather than quarter turns as well. Species 11 to 32 have half-turn symmetries on account of having axes of symmetry at right angles. They are the subject of \cite{Thomas2010a}. The point was made with Figures~\ref{fig:66abc}a and \ref{fig:67abc}a that there is no unique way to draw lattice units, and so there is a degree of arbitrariness to the lattice units that I use to illustrate these symmetry groups. This section takes up again the topic of Section~\ref{sect:sym}.

The simplest of these symmetry groups to appreciate is that illustrated in Figure~\ref{fig:nt2ab}a. One lattice unit of the symmetry group is outlined by mirrors (axes of reflective symmetry in the plane) represented by thick dark lines. They are dark to indicate the involvement of $\tau$. When cells are reflected in an axis of symmetry at $\pi/4$ to the horizontal, warps and wefts are interchanged, reversing dark and pale. But the symmetry here preserves colour, and so $\tau$ must be called upon to restore original colours. There are also a mirror through the middle of the lattice unit in each perpendicular direction and, where mirrors intersect, centres of half-turn symmetry indicated by \diaa . One can easily see the reflective and rotational symmetries in the figure. The mirrors and half turns repeat periodically all across the infinite plane. The boundaries in the diagram just indicate what the periods are. Perpendicular mirrors are characteristic of species 25 and 26 with their intersections differently placed in the different species. The example is of species 25.

Figure~\ref{fig:5cXX}a illustrates a lattice unit with no bounding lines; the axis representations just stop at the boundaries I chose. The axes here are of glide-reflections; they are hollow to indicate no involvement of $\tau$. No such involvement entails that dark cells are pale after their {\em side-preserving} glide-re\-flec\-tion and vice versa. The coherent dark and pale regions are interchanged by the glide-reflections in both directions. As one can easily see, the half turns entailed by the perpendicular glide-reflections are not at axis intersections but half-way between both sets of perpendicular axes. Species 11 to 16 have intersecting glide-reflection axes of the two kinds in various combinations, the example being of 11. 
Species 13 and 15 differ from 11 in having respectively side-reversing glide-reflection axes in both directions and perpendicular side-preserving and side-reversing glide-reflection axes. This makes the three identically colorable.
\begin{figure}%29 from perp, formerly 9a.eps and 14a.eps
\[\begin{array}{cc}
\epsffile{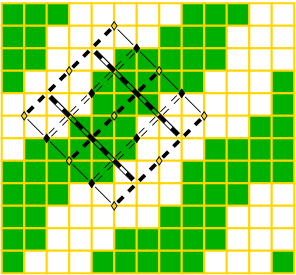} &\epsffile{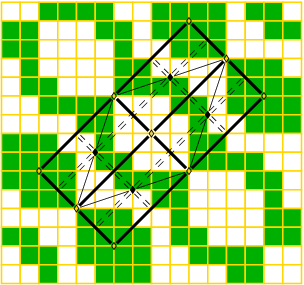}\\
\mbox{(a)} &\mbox{(b)}
\end{array}
\]
\caption{a. 8-7-2 of species 21 (\cite[Figure 10a]{Thomas2010a} and \cite[Figure 45a]{Thomas2014}). b. Roth's example 16-2499 of species 29 (\cite[Figure 15a]{Thomas2010a} and \cite[Figure 49]{Thomas2014}). These fabrics are 4-coloured in \cite[Figures 45b and 50a]{Thomas2014}.}
\label{fig:Perp}
\end{figure}

Figure~\ref{fig:5cXX}b illustrates a lattice unit with boundaries that are partly just thin boundary lines and partly axes of {\em side-reversing} glide-reflec\-tions involving $\tau$ to keep colours from being reversed. They pass down the middle of the pale and dark zig-zags. Between them are axes of side-preserving glide reflections that interchange dark and pale zig-zags. These are chosen boundaries; boundaries could just as well be axes of side-preserving glide-reflection instead. Perpendicular mirrors are easily noticed, but their solid black lines have dashed filling because they are coincident with axes of side-preserving glide reflections interchanging dark and pale zig-zags. Again, mirrors could have been used as lattice-unit boundaries. Because of the mixture of side-preserving and side-reversing glide-reflections, the half turns are some of them side-preserving, represented by \diaa , and some of them side-reversing (with $\tau$) represented by \diabb . The presence of glide-reflections perpendicular to mirrors is characteristic of species 17 to 24, the example being of 22.
Species $17_o$ and $19_o$ differ from 22 in having respectively only side-reversing glide-reflection axes perpendicular to the mirrors and only side-preserving glide-reflection axes perpendicular to the mirrors. This makes the three identically colorable.
Species 21 differs only in spacing from 22, which it otherwise resembles (example illustrated in Figure~\ref{fig:Perp}a). Whereas in 22 the distance parallel to mirrors between the closest \diaas is an even multiple of $\sqrt 2$ (to fit a \diab between them) and the distance perpendicular to mirrors between closest \diaas (and \diabs too) is odd, a similar description of 21 differs only in changing the last word to `even'. This matters. We abbreviate $\sqrt 2$ to $\delta$.

Figure~\ref{fig:13aXX}a illustrates a rhombic lattice unit bounded by thin lines because no axes are available for that purpose. Mirrors with their attendant centres of half turns surround and bisect the lattice unit, which is also cut by axes of side-preserving glide-reflection in both directions. In this environment, rather different from that of Figure~\ref{fig:5cXX}a (much more like that of Figure~\ref{fig:6-1-1}), side-reversing half-turn centres appear at the intersection of the glide-reflection axes. The presence of side-preserving glide-reflections parallel to mirrors in both directions is characteristic of species 29 to 32, the example being of 30.
More needs to be said of species 29 (example illustrated in Figure~\ref{fig:Perp}b). As the diagram shows, \diaas and \diabs are arranged in congruent lattices. It is characteristic of species 29 that the spacing of each lattice is even multiple of $\delta$ by even multiple of $\delta$. The lattice of \diabs on the glide-reflection axes has mirrors half-way between nearest pairs in both perpendicular directions. This spacing has consequences. Species $27_e$ has the same spacing although all the half-turn centres are side-preserving. 6-1-1 is of $27_o$ with these inter-\dia distances odd multiples of $\delta$.

Lastly, Figure~\ref{fig:6-1-1} illustrates the symmetry of the redundancy configuration 6-1-1 and the arrangement of species 27 and 28. It is similar to that of the previous paragraph except that its glide-reflection axes are side-reversing with side-preserving half-turn centres at their intersections. While species 28 is unusually interesting \cite{Thomas2010a}, it is not relevant to the current concern and will be ignored. 6-1-1 can obviously be 6-coloured with itself as redundancy configuration. This short discussion scratches the surface of the groups involved in these species but that is all. For a full discussion one needs \cite{Thomas2010a}.

\begin{thm}%4
The isonemal fabric 6-1-1 can be used as the redundancy configuration for perfect colouring with 6 colours by thin striping of some isonemal fabrics of species 11, 13, 15, 17, 19, 22, 25, 27, and 30.
\end{thm}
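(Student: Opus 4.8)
The plan is to reduce the statement to the isometry-group subgroup criterion of Section~\ref{sect:col} and then to match lattices species by species, exactly as was done for the $(5,3)$ satin in Sections~\ref{sect:col} and \ref{sect:results}. By the principle that the redundant cells of a perfect colouring of an isonemal fabric must be the dark cells of an isonemal derived prefabric whose isometry group $G_2$ contains $G_2$ of the fabric coloured \cite[Lemma 4.6]{Thomas2013}, a fabric $F$ admits such a colouring with 6-1-1 as its redundancy configuration only if $G_2(F)$ is a subgroup of $G_2$ of 6-1-1. First I would read off $G_2$ of 6-1-1 by projecting its species-$27_o$ symmetries: the perpendicular mirrors, the side-reversing glide-reflections in both directions, and the half-turns at all the mirror and glide intersections each drop their $\tau$, leaving perpendicular mirror and glide axes together with half-turns on a rhombic lattice whose spacing between parallel mirrors is an odd multiple of $\delta$. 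It is this particular lattice, not merely the group type, that a coloured fabric must fit inside.

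Next I would establish sufficiency, the analogue of the discussion around Figure~\ref{fig:75b}. Since 6-1-1 is isonemal and, as already observed, can be perfectly 6-coloured with itself as its own redundancy configuration (the six stripe colours being assigned so that the monochromatic crossings are precisely its dark cells), its own symmetries permute those six colours coherently; here the only operations to verify are half-turns, mirrors, and glide-reflections, there being no quarter turns. Granting this, any $F$ with $G_2(F)$ a subgroup of $G_2$ of 6-1-1 inherits coherent colour permutations on its redundant cells, because each symmetry of $F$ shares its isometric part with a symmetry of 6-1-1 and hence moves the redundant cells --- and so the colours --- in the same coherent way. Thus the subgroup condition is sufficient as well as necessary, and the theorem reduces to producing, for each listed species, one isonemal fabric whose symmetry lattice embeds in the odd-$\delta$ rhombic lattice of 6-1-1.

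For the existence claims I would exploit the identical-colorability groupings of Section~\ref{sect:nonsatin}, which already tell us that species 11, 13, and 15 share one group $G_2$ and that species 17, 19, and 22 share another, so a single representative of each triple suffices. Together with species 25, 27, and 30 this leaves only a handful of distinct symmetry patterns to place: one built from perpendicular glide-reflections with half-turns and no mirrors, one from a mirror with perpendicular glides, one from perpendicular mirrors, the pattern of 6-1-1 itself, and one from mirrors with parallel glides. For each I would display a single hanging-together isonemal fabric whose mirror, glide, and half-turn loci lie on the corresponding sub-configuration read straight off Figure~\ref{fig:6-1-1} and the figures of Section~\ref{sect:nonsatin}; the colouring is then perfect by the sufficiency just established. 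This is the direct analogue of carrying Theorems~\ref{thm:sat1} and \ref{thm:2sat1} by exhibited patterns.

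The hard part will be this spacing match, the counterpart for 6-1-1 of the eligibility analysis of Section~\ref{sect:larger}, and it is what forces the word \emph{some} into the statement. The list deliberately omits the near-twins 21, 26, 29, and $27_e$, which carry the same axis-and-centre patterns as 22, 25, 30, and $27_o$ but place an even multiple of $\delta$ where 6-1-1 has an odd one; such a lattice cannot sit inside the odd-$\delta$ lattice of 6-1-1, so those species admit no colouring of this kind. Checking, for each admitted species, that an odd-spaced representative actually exists as a fabric (not merely a prefabric) at a size fitting 6-1-1 is where the real work lies; by contrast the coherence of the colour permutation is immediate once the self-6-colorability of 6-1-1 is in hand.
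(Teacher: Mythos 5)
Your positive (existence) half is essentially the paper's own argument: the necessary subgroup condition on isometry groups $G_2$, sufficiency via the observation that the six colour classes of redundant cells are rigid lattices that any isometry preserving the 6-1-1 configuration can only permute as wholes, and reduction to one exhibited representative per isometry-group pattern using the identical-colorability triples $\{11,13,15\}$ and $\{17,19,22\}$ together with 25, $27_o$ (6-1-1 colouring itself), and 30. Since the statement asserts only this positive half, your outline would carry it, modulo actually producing the representative fabrics, which is exactly what the paper does with its figures.

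Where you diverge from the paper, and where your claims are wrong, is the elimination side, which the paper treats as part of the proof (``by argument that species not in the list cannot have examples''). First, species 26 is not excluded by an even-versus-odd multiple of $\delta$ mismatch: the paper excludes it, along with $18_s$, 23, $28_e$, $28_n$, and 31, because some of its half-turn centres lie at cell \emph{centres}, whereas all half-turn centres of 6-1-1 lie at cell corners. Second, your blanket principle that an even-$\delta$ lattice ``cannot sit inside the odd-$\delta$ lattice of 6-1-1'' is not the actual obstruction for 21 and 29; substantial parts of those configurations embed without difficulty (for 21 one can place alternating side-preserving and side-reversing half-turn centres $3\delta$ apart along a glide-reflection axis of 6-1-1), and the failure appears only on completing the configuration: successive such rows would have to be an even multiple of $\delta$ apart with a mirror halfway between them, while the mirrors of 6-1-1 bisect only inter-axis distances that are odd multiples of $\delta$. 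Third, your sketch omits most of the species the paper must dispose of: the parallel-axis species 1--10, among which $8_e$ and 9 ``look promising'' and require a genuine parity argument about rhombic lattice-unit dimensions (showing $\ell/2$ and $w/2$ always share parity), and the many species whose glide-reflection axes are not in mirror position (12, 14, 16, $18_o$, $18_e$, 20, 24, $28_o$, 32, etc.). Finally, the word ``some'' in the statement is not produced by the species-elimination analysis; it records that within each listed species only those fabrics whose own $G_2$ embeds in that of 6-1-1 (a spacing condition on the individual fabric) can be so coloured.
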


\begin{proof}
The proof, like that of previous theorems is by the existence of required colourings in species in the list and by argument that species not in the list cannot have examples. The argument comes first, and then non-exhaustive examples of the required colourings will be shown in the remainder of the section. Because the redundancy configuration 6-1-1 is not a twill, the order of the warp colours and the order of the weft colours differ throughout the examples.

Roth \cite{Roth1993} uses 6-1-1 as his example of his symmetry-group type 27 ($cmm/p2$, 11/-); it is of subspecies $27_o$ with reflection and glide-reflection axes alternating $\delta$ apart in one direction and $3\delta$ apart in the perpendicular direction and with centres of half turns at cell-corner intersections of each kind of axis. This is the symmetry group $G_1$, which has to be projected onto its isometry group $G_2$ for the purpose of considering which fabrics can be coloured with that redundancy configuration. Here $G_2$ consists of side-reversing mirrors, side-reversing glide-reflections, and the half turns. To be colourable with this redundancy, a fabric's isometry group $G^\prime_2$ must be a subgroup of the $G_2$ just described, the axes of which are so close together that there are many opportunities.

Obviously no quarter-turn operation can apply; this e\-li\-mi\-nates spe\-cies 33 to 39. Less obviously, the mirror position of the glide-reflection axes (i.e., diagonally across cells) eliminates as candidates all species having glide-reflection axes but not in mirror position ($1_e$, $1_o$, $2_e$, $2_o$, 4, $8_o$, 10, 12, 14, 16, $18_o$, $18_e$, 20, 24, $28_o$, and 32) as for all even numbers of colours with twilly redundancy \cite[Section 5]{Thomas2014}. 
 
\begin{figure}%30
\[\begin{array}{cc}
\epsffile{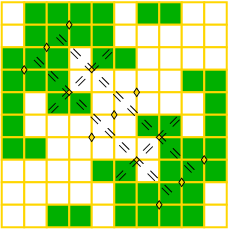} &\epsffile{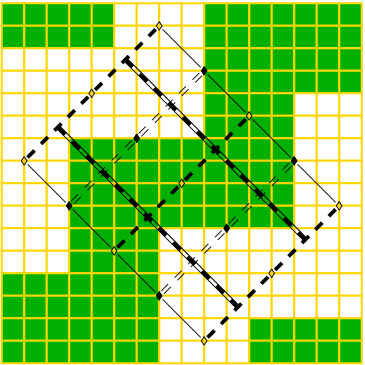}\\
\mbox{(a)} &\mbox{(b)}
\end{array}
\]
\caption{a. Design 12-111-2 of species $11_o$ (\cite[Figure 8a]{Thomas2013} and \cite[Figure 15a]{Thomas2014}). b. Species-22 example \cite[Figure 11a]{Thomas2010a}.}%fig:5cXX}
\label{fig:5cXX}
\end{figure}

Species with only parallel axes other than $1_e$, $1_o$, $2_e$, $2_o$, 4, $8_o$, and 10 need to be considered. 
The axes of the fabric to be coloured must be spaced so as to match some of those of 6-1-1. The spacings of the axes (except for those of $1_e$, $1_o$, $2_e$, $2_o$, and 4) are conveniently set out in the proof of Theorem 2 in \cite{Thomas2009}. 

\noindent 1. In species $1_m$, $2_m$, and 3, there are only glide-reflection axes, and they are half an odd multiple of $\delta$ apart.

\noindent 2. In species $5_o$, the part of $5_e$ with $\ell$ even, 7, $8_o$, 10, there are mirrors, and they are half an odd multiple of $\delta$ apart.

\noindent 3. In the part of species $5_e$ with $\ell$ odd and species 6, there are mirrors a whole multiple of $\delta$ apart and with $G_1$ lattice units an odd multiple of $\delta$ long.

\noindent 4. In species $8_e$ and 9, there are mirrors an even number of $\delta$ apart with glide-reflection axes between them and with the glides an odd multiple of $\delta$.

Beyond classes 1 to 3 of spacings that are plainly unsuitable, number 4, with mirrors an even number of $\delta$ apart, likewise interleaved axes of glide-reflection with glides an odd multiple of $\delta$ long, looks promising. As the glides of 6-1-1 are $\delta$ in one direction and $3\delta$ in the other, species $8_e$ and 9 (differing only in whether glide-reflections are side-reversing ($8_e$) or side-preserving (9)) need to be considered, but the sizes of the rhombic lattice units needed for isonemality in species $8_e$ and 9, length $\ell\delta$ and width $w\delta$ with either $\ell$ and $w$ odd and $(\ell , w)=1$ (not possible for multiples of 2 and 6) or $\ell$ and $w$ even, $(\ell /2, w/2)=1$, and $\ell /2$ and $w/2$ different in parity (also impossible; see next paragraph) \cite[Lemma 4]{Thomas2009} cannot be reconciled with the sizes needed to match 6-1-1. No example can be constructed.

To see that $\ell$ and $w$ even and $\ell /2$ and $w/2$ different in parity is not possible, observe that the rhombic lattice units available for subgroups of 6-1-1's symmetry group must be a multiple of 6$\delta$ long and of 2$\delta$ wide. When $\ell$ is an odd multiple of 6, $w$ must be an odd multiple of 2. $\ell= 6(2m+1)$ and $w=2(2n+1)$ for some $m$ and $n$. Then $\ell/2= 3(2m+1)$ and $w/2=(2n+1)$, both odd. When $\ell$ is an even multiple of 6, $w$ must be an even multiple of 2. $\ell= 6(2m)$ and $w=2(2n)$ for some $m$ and $n$. Then $\ell/2= 6m$ and $w/2=2n$, both even. $\ell/2$ and $w/2$ always have the same parity.

Beyond the fabrics with no quarter-turn symmetry are species
11, 13, 15, 17, $18_s$, 19, 21--23, 25--27, $28_e$, $28_n$, and 29--31. Some centres of half turns in species $18_s$, 23, 26, $28_e$, $28_n$, and 31 are at cell centres and so cannot be imbedded in the configuration of centres of 6-1-1, which are all at cell corners. 

Two further species cannot have group $G_2$ embedded in that of 6-1-1, both for much the same reason. For species 21 there is no difficulty in placing \diaas and \diabs alternately $3\delta$ apart along a glide-reflection axis in 6-1-1 with positive slope in Figure~\ref{fig:6-1-1}, in each case with a mirror half-way between them. But the spacing of rows such as these is incompatible with the 6-1-1 configuration. These rows with positive slope must be an even multiple of $\delta$ apart and with a mirror half-way between them. But mirrors lie $\delta/2 + m\delta$ away from the glide-reflection axes with positive slope. Twice that distance is $(2m+1)\delta$. Only inter-axis distances odd in $\delta$ are divided in half by mirrors. This shows the incompatibility. In the perpendicular direction, mirrors lie $3\delta/2 + 3n\delta$ away. Twice that is $(6n+3)\delta$, again always odd; so rotation of easy rows through $\pi/2$ is blocked. The similar situation for species 29 is worse. Consider placing the lattice of \diabs in glide-reflection axes onto \diaas of 6-1-1 with mirrors half-way between them in both directions. The \diaas in axes in 6-1-1 are $\delta$ and $3\delta$ apart. If a \diab is placed on any \dia at intersecting axes, the adjacent \diab in neither perpendicular direction can be placed: mirrors are $\delta/2 + m\delta$ and $3\delta/2 + 3n\delta$ away. As for species 21, twice these distances are odd in $\delta$ but the next \diab must be an even multiple of $\delta$ away and so cannot be placed in either perpendicular direction.

Remaining are species 11, 13, 15, $17_o$, $19_o$, 22, 25, $27_o$, and 30, as required. Four examples from these will be discussed and illustrated.
\end{proof}
 
\begin{figure}%31
\[\begin{array}{cc}
\epsffile{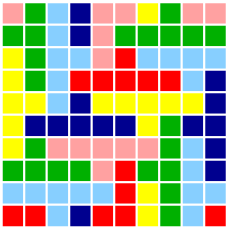} &\epsffile{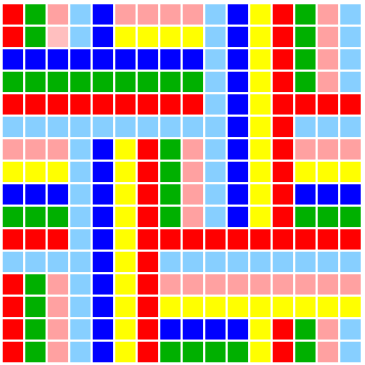}\\
\mbox{(a)} &\mbox{(b)}\\
\epsffile{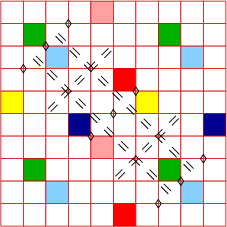} &\epsffile{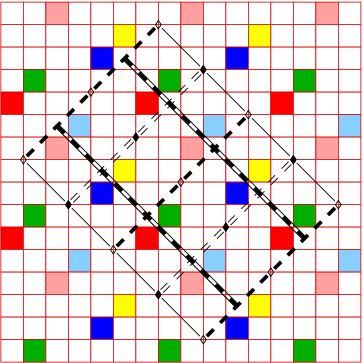}\\
\mbox{(c)} &\mbox{(d)}
\end{array}
\]
\caption{a and b. Six-colourings of Figure~\ref{fig:5cXX}a and b. c and d. Coloured redundant cells in a and b with symmetries shown from Figure~\ref{fig:5cXX}.}%fig:5a3b}
\label{fig:5a3b}
\end{figure}

Roth's \cite{Roth1993} example (Figure~\ref{fig:5cXX}a) of species $11_o$, 12-111-2, was 3-coloured thickly in \cite[Figures 8b and c]{Thomas2013} and thinly in \cite[Figure 16a]{Thomas2014}. Its glide-reflection axes and the positions of its half-turn centres can be made to correspond to those of 6-1-1. Accordingly, its group $G_2$ is a subgroup of that of 6-1-1, and it can be 6-coloured as in Figure~\ref{fig:5a3b}a.
Since the glide-reflections are side-preserving, one sees in the design all the symmetry that there is.
When one looks at Figure~\ref{fig:5a3b}a, one does not see the redundancy configuration as easily as in the colourings of designs with quarter turns. For this reason and because one can check that the colouring is perfect by observing just how the configuration of redundant cells is transformed by the design's symmetry group (really just $G_2$), these colourings are supplemented by diagrams showing redundant cells -- as in Figure~\ref{fig:75b}.

One sees in both Figures~\ref{fig:5a3b}c and d what also appears in Figures~\ref{fig:nt2ab}c and \ref{fig:13aXX}c, a square lattice in each colour. In all four cases the pictures are the same, just oriented and coloured variously. Figure~\ref{fig:5a3b}d has the advantage of showing at least two lattice squares of each colour, and so I shall discuss it, but because the lattices are the same any remark applies equally to the others. The operations that are symmetries of the fabric design, because of the subgroup relation between the $G_2$ groups, are operations that leave the redundancy configuration as a whole invariant. Because they are all invertible rigid motions, the lattices remain square lattices related in the same way. They cannot be piled on top of one another or mixed. All that can happen to them is colour permutation as wholes. All of the operations involved perform coherent colour permutations of the lattices, as required. 
For example, in Figure~\ref{fig:5a3b}d, the reflection that interchanges the red cell and the green cell in the middle of the diagram interchanges the whole of the red lattice with the whole of the green lattice, the pink and the pale blue, and the yellow and dark blue. To check that each of the symmetries has that kind of effect on the redundant cells, as in Figure~\ref{fig:75b}, is to check that the symmetry has the required coherent effect. To check that they all do is to check that the colouring is perfect. There is no need to check the colouring as a whole, as in Figure~\ref{fig:5a3b}b; it is there to show what the pattern looks like. Species $13_o$ and $15_o$ can be coloured like $11_o$.

The example of Figure~\ref{fig:5cXX}b of order 48 was made up for \cite{Thomas2010a} to display species 22 and appears in \cite{Thomas2014}, 3-coloured in Figure~14b, 4-coloured in Figure~42a, and 6-coloured in Figure~27, all with twilly redundancy. A pair of dark cells of 6-1-1 can be laid down in cells having positions $(2,6)$ and $(3,5)$ in matrix notation and so on, that is, adjacent to axes of glide-reflection of positive slope and straddling the orthogonal mirrors. Fabric symmetries then leave them collectively invariant. This allows the 6-colouring in Figure~\ref{fig:5a3b}b. Observe that the mirrors leaving the design invariant are combined with $\tau$, so that their effect on the coloured fabric is to put motifs from each side onto the other side. Reflective symmetry is invisible in the colouring, but the effect can be seen in the redundant cells. Species $17_o$ and $19_o$ can be coloured like 22.

\begin{figure}%32
\begin{center}
\epsffile{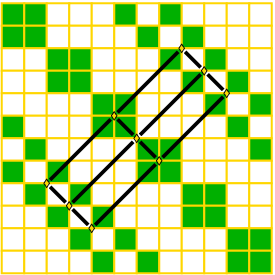}

(a)
\end{center}
\[\begin{array}{cc}
\epsffile{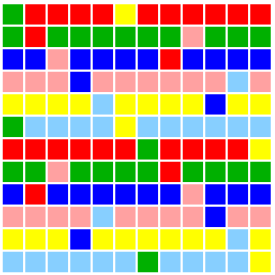} &\epsffile{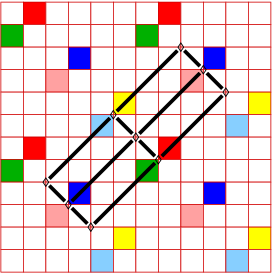}\\%30%
\mbox{(b)} &\mbox{(c)}
\end{array}
\]
\caption{a. Design 12-163-1 of species $25_o$. b. 6-colouring of a. c. Coloured redundant cells in b with symmetries shown from a.}%fig:nt2ab}
\label{fig:nt2ab}
\end{figure}

Because of the large scale of the design of Figure~\ref{fig:5cXX}b, it is not easy to see what is the overall pattern of Figure~\ref{fig:5a3b}b. A verbal description, arrived at looking at a larger portion of the pattern, may help. From lower-left to upper-right there is a barbed red zig-zag. Eight cells diagonally up and to the left there is a congruent zig-zag in dark blue, and down and to the right one in pink (2 cells showing at bottom right). Moving the same way there are three zig-zags with barbs facing oppositely because they are glide-reflection images of the first three; they are in green (2 cells showing at top left), light blue, and yellow. Keeping these from touching except at endpoints are rectangles consisting of two rows of cells four long and ten long in contrasting colours. In the middle of the diagram a short green and pink rectangle separates the light-blue zig-zag from the red one. Two more of these same rectangles appear near the top right and bottom left corners, six cells diagonally up and to the right/down and to the left. Also conspicuous are the long blue and yellow rectangles, from top and bottom edges, separating the red and light-blue zig-zags. The side-reversing glide reflections and half turns produce their effects onto the opposite side like mirrors (with $\tau$). The side-preserving glide-reflections and half turns on the other hand have visible effects that include colour permutations.

Roth's \cite{Roth1993} example (Figure~\ref{fig:nt2ab}a) of species $25_o$, 12-163-1, can be 6-coloured as in Figure~\ref{fig:nt2ab}b because its mirrors can be made to coincide with those of 6-1-1. 
This pattern has the feature that in \cite{Thomas2011} I called `stripiness' not because there is anything aesthetically negative about stripes but because there is a predominance of stripes in a single direction in contrast with the balance of vertical and horizontal stripes in Figure~\ref{fig:5a3b}b or \ref{fig:13aXX}b. The balance there results from the same size and shape of the blocks of pale and dark in its design, and the imbalance here results from the predominance in this design of pale cells. The effect is reversed on the reverse, where the stripiness is the same but the direction orthogonal on account of the side-reversing effect of the mirrors. 
\begin{figure}%33
\begin{center}
\epsffile{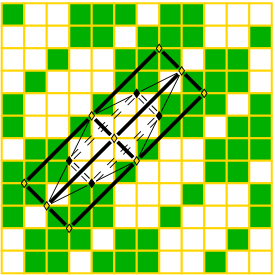}

(a)
\end{center}
\[\begin{array}{cc}
\epsffile{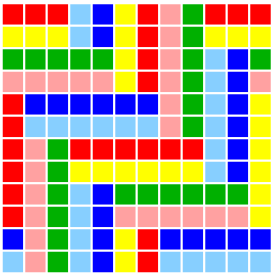} &\epsffile{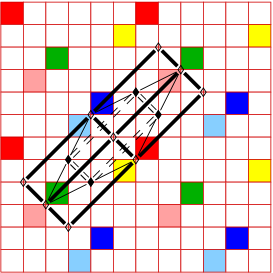}\\ %30%
\mbox{(b)} &\mbox{(c)}
\end{array}
\]
\caption{a. Design 12-315-4 of species 30. b. 6-colouring of a. c. Coloured redundant cells in b with symmetries shown from a.}%fig:13aXX}
\label{fig:13aXX}
\end{figure}

Roth's \cite{Roth1993} example (Figure~\ref{fig:13aXX}a) of species 30, 12-315-4, is an interesting example of how the manner of colouring makes a difference to the result. This example was used in \cite{Thomas2013} for thick 3-colouring to produce a pattern (Fig.~9b there) with interesting motifs and in \cite{Thomas2014} for thin 3-colouring to produce (Figure~7b there) motifs arranged interestingly. If it is 6-coloured with the dark cells of 6-1-1 {\em between} the diagonally adjacent pairs of dark cells in the design, that is, with the illustrated mirrors and axes of 6-1-1 corresponding to those of 12-315-4, the result is bits of colour no larger than three in a row. On the other hand, if the dark cells of 6-1-1 are put {\em on} the diagonally adjacent pairs of dark cells in the design, the result (Figure~\ref{fig:13aXX}b) is longer stripes arranged as pleasingly as six colours can be arranged -- not quite herringbone -- and in different orders vertically and horizontally on account of the non-twilly redundancy.

What is shown is actually that the list of allowed species could be more narrowly stated as $11_o$, $13_o$, $15_o$, $17_o$, $19_o$, 22, $25_o$, $27_o$, and 30.

\section{Weaving non-planar surfaces}%9
\label{sect:nonplanar}
\noindent Weaving and colouring flat tori were discussed in the papers on twilly redundancy. The opposite edges of a period parallelogram of a design can be identified to weave a torus, and the opposite edges of a period parallelogram of a pattern (often not minimal) can be identified to weave a torus in colour. Since the colours in a pattern may not match up in a period parallelogram of a design, the parallelogram for edge identification of a pattern may have to be larger, but there is never any difficulty in taking enough small parallelograms to make the colouring come out even. For example in Figure~\ref{fig:81b72b} of 10-39-1, either the whole left diagram can be used (with ten strands in each direction) to weave a torus or the whole right diagram to 5-colour it. The oblique square outlined solidly in Figure~\ref{fig:81b72b}a (a symmetry-group lattice unit) can be used to weave a torus, but it is much too small to use to colour one this way. Red and yellow cells along its lower right boundary would have to be identified with light blue and red cells respectively along its upper left boundary. It requires a square made up of 25 such lattice units (5 by 5) for the colours to come out even. An interesting coincidence occurs in this example despite the sizes of the tori being so different (100 cells vs 500 cells). In both cases there are two strands of each colour crossing the torus in each perpendicular direction. The oblique strands are much longer, because each of them crosses the larger square twice before its ends are identified. So nothing important for tori is changed with non-twilly redundancy.
\begin{figure}%34not96
\[
\epsffile{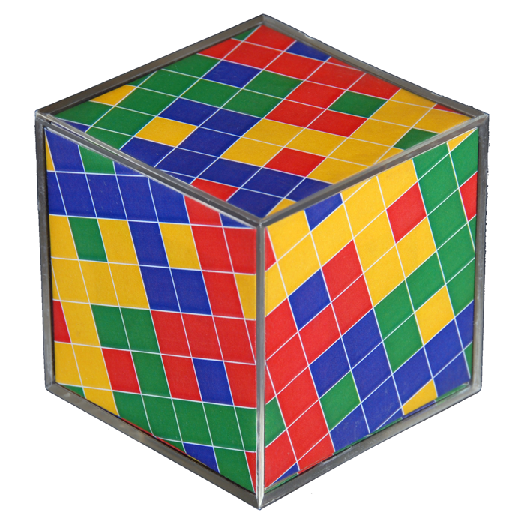}
\]
\caption{Downward view of 4-coloured cube of \cite[Figures 14 and 15]{Thomas2012}.}%fig:colCubeFront}
\label{fig:colCubeFront}
\end{figure}

That is quite different from weaving cubes, where redundancy invariant under quarter turns is essential if anything like isonemality is to occur. This idea began with the suggestion from G.C.~Shephard to Jean Pederson, published in \cite{Pedersen1981}, of a net for a cube divided up like the lower lattice unit of Figure~\ref{fig:67abc}a to show that it was possible to weave a cube in that way. A full net for a different cube appears as Figure 16a of \cite{Thomas2010b}. Weaving cubes was discussed in \cite{Thomas2010b} and 2-colouring them with 2-fold, 3-fold, and 4-fold rotational colour symmetries about edge centres, vertices, and face centres respectively was discussed in \cite{Thomas2012}. To conclude this paper without spending a lot of time and space on the matter, I display in Figures~\ref{fig:colCubeFront} and \ref{fig:colCubeBack} an example of a perfect 4-colouring of the example in \cite{Thomas2012} also preserving the rotational symmetries of the cube.

It is of some interest that the underlying fabric of this 4-coloured cube (Figure~\ref{fig:98}, an expansion of Figure 13 of \cite{Thomas2012}) is 5-colourable not 4-colourable, being of species $33_4$.
Figure~\ref{fig:98} is a diagram of the design of the fabric extended to contain like a net the three faces of the cube visible in Figure~\ref{fig:colCubeFront}. The lower left edge of the net with positive slope corresponds to the lower right edge of the cube.
The conventions for illustrating thick striping with boxes outside a design do not quite fit what needs to be done here; one vertical stripe is marked as half blue and half red because the strand that is its upper part has to be blue and the different strand that is its lower part has to be red.
The colours of the vertical strands in the lower right lattice unit are merely the continuation of the relevant colours of the horizontal strands in the upper lattice unit since the strands are continuations in the cube.
In a 5-colouring of the fabric, only four colours feature in each lattice unit, and using only those four colours for the net obviously does not extend to the whole plane.
\begin{figure}%35not97
\[
\epsffile{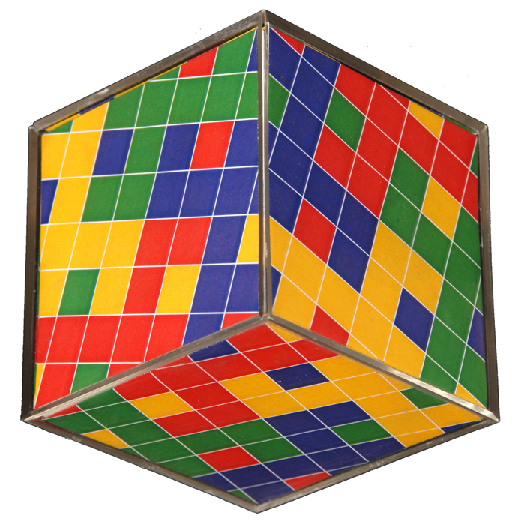}%
\]
\caption{Upward view of the back of the cube of Figure~\ref{fig:colCubeFront} as reflected in a mirror.}%fig:colCubeBack}
\label{fig:colCubeBack}
\end{figure}

My attention was drawn to the weaving of polyhedra by a later paper of Jean Pedersen \cite{Pedersen1983} in \emph{The Mathematical Intelligencer}. \emph{The Mathematical Intelligencer} has struck again. In a single issue it had a posthumous paper by Paulus Gerdes \cite{gerdes2015} discussing the weaving of polyhedra in African cultures and one \cite{dunning2015} discussing the topological models of Alexander Crum Brown. The former suggests mentioning the distinction between weaving a polyhedron as above, where the faces are woven, and plaiting it with a strip (or with more than one strip) so that each whole face is composed of a single triangle or quadrilateral from that strip, {\em each face not being woven}. Gerdes illustrates both. Crum Brown used colour to make it easy to distinguish the different surfaces that he interlaced by making each one a different colour. That is always possible, sometimes helpful, and could be used on the cubes above by making the eight strands be of eight colours rather than striping thickly with four.
Jean Pedersen too has struck again \cite{HP2010} with more on weaving polyhedra including cubes (Ch.~9) and a discussion of cubes' symmetry (Ch.~10) but with cubes less interesting than the above.
\begin{figure}%36not98
\[
\epsffile{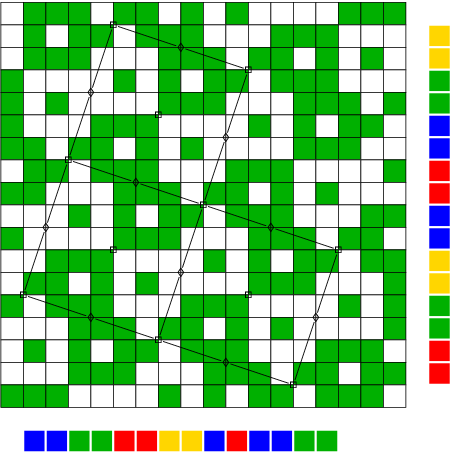}%30%
\]
\caption{Design used for cubes in Figures 14 and 15 of \cite{Thomas2012} with colouring indication for Figures~\ref{fig:colCubeFront} and \ref{fig:colCubeBack}. Unconventional colouring indication is explained in text.}%fig:98}
\label{fig:98}
\end{figure}

\nocite{*}
\bibliographystyle{amsplain}

\end{document}